\newtheorem{theorem}{Theorem}[section]
\newtheorem{lemma}[theorem]{Lemma}
\newtheorem{proposition}[theorem]{Proposition}
\newtheorem{corollary}[theorem]{Corollary}
\theoremstyle{definition}
\newtheorem{definition}[theorem]{Definition}
\newtheorem*{remark}{Remark}
\numberwithin{equation}{section}
\numberwithin{theorem}{section}
\newcommand{\R}{\mathbb{R}}
\newcommand{\Z}{\mathbb{Z}}
\newcommand{\Q}{\mathbb{Q}}
\newcommand{\C}{\mathbb{C}}
\newcommand{\Gm}{\mathbb{G}_\textrm{m}}
\newcommand{\Qbar}{\overline{\Q}}
\newcommand{\cC}{\mathcal{C}}
\newcommand{\cE}{\mathcal{E}}
\newcommand{\cA}{\mathcal{A}}
\newcommand{\bo}[1]{\boldsymbol{#1}}
\newcommand{\mc}[1]{\mathcal{#1}}
\def\lg{\left\lbrace}
\def\rg{\right\rbrace}
\author[F. Barroero]{Fabrizio Barroero}
\address{Department of Mathematics and Computer Science, University of Basel, Spiegelgasse 1, 4051 Basel, Switzerland}
\email{fbarroero@gmail.com}
\author[L. Capuano]{Laura Capuano}
\address{Mathematical Institute,
University of Oxford,
Woodstock Road
Oxford
OX2 6GG, UK}
\email{laura.capuano1987@gmail.com}
\title[Unlikely intersections]{Unlikely intersections in products of families of elliptic curves and the multiplicative group}
\subjclass[2010]{11G05 - 11G50 - 11U09 - 14K05 }   
\date{\today}
\begin{document}

\begin{abstract}
Let $E_\lambda$ be the Legendre elliptic curve of equation $Y^2=X(X-1)(X-\lambda)$. We recently proved that, given $n$ linearly independent points $P_1(\lambda), \dots,P_n(\lambda)$ on $E_\lambda$ with coordinates in $\overline{\mathbb{Q}(\lambda)}$, there are at most finitely many complex numbers $\lambda_0$ such that the points $P_1(\lambda_0), \dots,P_n(\lambda_0)$ satisfy two independent relations on $E_{\lambda_0}$. In this article we continue our investigations on Unlikely Intersections in families of abelian varieties and consider the case of a curve in a product of two non-isogenous families of elliptic curves and in a family of split semi-abelian varieties.
\end{abstract}

\maketitle

\section{Introduction}

Let $n, m$ be positive integers and let $E_\lambda$ denote the elliptic curve with Legendre equation
\begin{equation}\label{legendre}
Y^2=X(X-1)(X-\lambda).
\end{equation}

We consider an irreducible curve $\cC\subseteq \mathbb{A}^{2n+2m+2}$, defined over $\Qbar$, with coordinate functions $$(x_1,y_1,\dots ,x_{n}, y_{n}, \lambda,u_1,v_1, \dots , u_{m},v_{m},\mu ),$$ such that, for every $i=1,\dots , n $, the points $P_i=(x_i,y_i)$ lie on the elliptic curve $E_\lambda$ and, for every $j=1,\dots , m $, the points $Q_j=(u_j,v_j)$ lie on $E_\mu$. We will assume that $\lambda,\mu \neq 0,1 $ on the curve. Therefore, $\cC $ is not Zariski closed in $\mathbb{A}^{2n+2m+2}$ but it is locally closed.

We call $R_1$ and $R_2$ the endomorphism rings of $E_\lambda$ and $E_\mu$, respectively. These will be isomorphic to $\Z$, unless we have a fixed elliptic curve with complex multiplication. For instance, if $\mu=\mu_0$ is constant on $\cC$ and $E_{\mu_0}$ has complex multiplication, then $R_2$ will be strictly larger than $\Z$.

Suppose that, on $\cC$, the two elliptic curves $E_\lambda$ and $E_\mu$ are not isogenous (for instance, we must have $\lambda\neq \mu^{\pm 1}$) and that the $P_i$ and $Q_j$ are independent, i.e., there is no non-trivial relation among them over $R_1$ or $R_2$, respectively.

Now, as $\bo{c}$ varies on $\cC(\C)$, the specialized points $P_i(\bo{c})$ and $Q_j(\bo{c})$ will be lying on the specialized elliptic curves $E_{\lambda(\bo{c})}$ and $E_{\mu(\bo{c})}$, respectively. We implicitly exclude the finitely many $\bo{c}$ with $\lambda(\bo{c})$ or $\mu(\bo{c})$ equal to 0 or 1, since in that case we have a singular curve.

It might happen that, for a certain $\bo{c}$, the specialized points become dependent over $R_1$ or $R_2$, or an eventually larger endomorphism ring. We do not consider the latter case and we will talk about relations among the generic and specialized points always meaning relations over $R_1$ and $R_2$.

In \cite{linrel} we proved that, in case $\lambda$ is non-constant and the $P_i$ are independent on $\cC$, there are at most finitely many $\bo{c} \in \cC(\C)$ such that $P_1(\bo{c}), \dots , P_n(\bo{c})$ satisfy two independent relations on $E_{\lambda(\bo{c})}$ (see \cite{MZ12} for the case $n=2$).

In this article we continue our program of studying Unlikely Intersections in families of abelian varieties and prove the following theorem.

\begin{theorem} \label{mainthm}
Let $\cC\subseteq \mathbb{A}^{2n+2m+2}$ be an irreducible curve defined over $\Qbar$ with coordinate functions $(x_1,y_1,\dots ,x_n, y_n, \lambda, u_1,v_1, \dots , u_{m},v_{m},\mu)$, such that, for every $i=1,\dots , n $, the points $P_i=(x_i,y_i)$ lie on $E_\lambda$ and, for every $j=1,\dots , m $, the $Q_j=(u_j,v_j)$ lie on $E_\mu$. Suppose moreover that $E_\lambda$ and $E_\mu$ are not isogenous and that there are no generic non-trivial relations among $P_1,\dots , P_n$ on $E_\lambda$ and among $Q_1,\dots , Q_m$ on $E_\mu$.
Then, there are at most finitely many $\bo{c}\in \cC(\C)$ such that there exist $(a_1,\dots, a_n) \in R_1^n\setminus \{0 \}$ and $( b_1, \dots , b_m) \in R_2^m\setminus \{0 \}$ for which
\begin{equation*}\label{rel1}
a_1P_1(\bo{c})+\dots +a_nP_n(\bo{c})= O \text{  and     } \  b_1Q_1(\bo{c})+\dots +b_m Q_m(\bo{c})= O .
\end{equation*}
\end{theorem}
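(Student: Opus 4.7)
The plan is to apply the Pila--Zannier strategy in the form developed by the authors in \cite{linrel} for a single elliptic family, extending it to the situation where two non-isogenous families occur simultaneously. To a point $\bo{c}\in\cC$ attach the tuple $(P_1(\bo{c}),\dots,P_n(\bo{c}),Q_1(\bo{c}),\dots,Q_m(\bo{c}))$ in the fibre over $(\lambda(\bo{c}),\mu(\bo{c}))$ of the abelian scheme $\cA := E_\lambda^n\times E_\mu^m$ over the base $B$ parametrising $(\lambda,\mu)$. A specialization $\bo{c}$ satisfies the two relations of the theorem precisely when this tuple lies in the codimension-$2$ subgroup scheme
\[
H_{\bo{a},\bo{b}} \;=\; \{a_1P_1+\cdots+a_nP_n=O\}\cap\{b_1Q_1+\cdots+b_mQ_m=O\}
\]
of $\cA$. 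Since $\cC$ has dimension one and each $H_{\bo{a},\bo{b}}$ has codimension two, every intersection $\cC\cap H_{\bo{a},\bo{b}}$ is \emph{unlikely} in the sense of Pink--Zilber, and the goal is to show that the union over all non-zero $(\bo{a},\bo{b})\in R_1^n\times R_2^m$ meets $\cC$ in only finitely many points.

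First I would fix a simply-connected open subset $U$ of the base, choose holomorphic bases of periods $\omega_1(\lambda),\omega_2(\lambda)$ of $E_\lambda$ and $\omega_1'(\mu),\omega_2'(\mu)$ of $E_\mu$ over $U$, and analytic elliptic logarithms $z_i(\bo{c})$ of $P_i(\bo{c})$ and $w_j(\bo{c})$ of $Q_j(\bo{c})$. A relation $a_1P_1(\bo{c})+\cdots+a_nP_n(\bo{c})=O$ then lifts to
\[
\sum_{i=1}^n a_i\,z_i(\bo{c}) \;=\; p_1\,\omega_1(\lambda(\bo{c})) + p_2\,\omega_2(\lambda(\bo{c}))
\]
for some $p_1,p_2\in\Z$ (with the obvious modification when $R_1$ is a CM order), and similarly the second relation involves integers $q_1,q_2$. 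Using the analytic family of periods together with the Weierstrass uniformisation, the resulting locus in $U\times\R^{2(n+m)+4}$ becomes a set $Z$ definable in the o-minimal structure $\R_{\textrm{an,exp}}$, whose integer points of coordinates $(\bo{a},p_1,p_2,\bo{b},q_1,q_2)$ encode exactly the desired specializations.

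Next I would assemble three standard ingredients. Silverman's specialization theorem together with N\'eron--Tate estimates on sections of an abelian scheme yields a polynomial upper bound $\|(\bo{a},\bo{b},p_\bullet,q_\bullet)\|\ll h(\bo{c})^{\kappa}$, where $h$ is the Weil height. A Masser--David--Hindry style orbit lower bound furnishes $|\mathrm{Gal}(\Qbar/K)\cdot\bo{c}|\gg T^{\delta}$ for suitable $\delta>0$, for special points of bounded height $T$. Finally, the Pila--Wilkie counting theorem applied to the transcendental part of $Z$ supplies only $O(T^{\varepsilon})$ integer tuples of norm $\leq T$. Combining the orbit lower bound with the Pila--Wilkie upper bound forces the height of every special $\bo{c}$ to be bounded, after which Northcott yields finiteness over $\Qbar$ and a standard spreading-out handles the remaining complex specializations.

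The main obstacle, and the step where the non-isogeny hypothesis is actually used, is to show that $Z$ contains no positive-dimensional semi-algebraic arc passing through infinitely many integer tuples, without which the Pila--Wilkie step collapses. In \cite{linrel} the analogous difficulty was resolved by a functional transcendence statement for a single elliptic scheme. Here I expect to need a joint Ax--Lindemann--Weierstrass statement for the product $E_\lambda\times E_\mu$, to the effect that no non-trivial algebraic relation exists among the periods $\omega_i(\lambda),\omega_j'(\mu)$ and the logarithms $z_i(\bo{c}),w_j(\bo{c})$ beyond those forced by an isogeny between $E_\lambda$ and $E_\mu$ or by a linear dependence among the $P_i$ or among the $Q_j$. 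Ruling these possibilities out via the hypotheses of the theorem and establishing (or invoking) the correct functional-transcendence statement for the product is, in my view, the hard technical core of the argument.
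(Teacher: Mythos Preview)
Your proposal follows the same Pila--Zannier strategy as the paper and names the right ingredients, but two points need correction.

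First, the logical flow of the height/degree argument is inverted. Silverman's specialization theorem is not used to bound the coefficient vector in terms of $h(\bo{c})$; rather, since $\lambda$ is non-constant and the $P_i$ are generically independent, it bounds $h(\bo{c}_0)$ \emph{outright} for any $\bo{c}_0$ at which the $P_i(\bo{c}_0)$ become dependent. This is the very first step, not the conclusion. With height already bounded, the $P_i(\bo{c}_0),Q_j(\bo{c}_0)$ have bounded N\'eron--Tate height, and Masser's small-generators theorem \cite{Masser88} gives relation vectors with $|\bo{a}|,|\bo{b}|\ll d_0^{\gamma}$, where $d_0=[k(\bo{c}_0):k]$. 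There is no separate ``orbit lower bound'' to invoke: the lower input is simply that the $d_0$ Galois conjugates of $\bo{c}_0$ all satisfy the \emph{same} relation $(\bo{a},\bo{b})$ and hence land in the same definable set. The upper bound comes from the Habegger--Pila refinement (Corollary~7.2 of \cite{HabPila14}), not bare Pila--Wilkie, because one is counting points of the surface $S$ lying on rational \emph{linear subspaces}, not rational points of $S$. Comparing $d_0\ll d_0^{\gamma\epsilon}$ bounds $d_0$, and then Northcott (bounded height plus bounded degree) finishes. Since $\cC$ is defined over $\Qbar$, special points are automatically algebraic; no spreading-out is needed.

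Second, and more seriously, you have not anticipated the case distinction that drives most of the new work. The functional-transcendence input is Bertrand's Th\'eor\`eme~L \cite{Ber09}, a relative Schanuel-type statement rather than an Ax--Lindemann--Weierstrass theorem: it gives algebraic independence of $z_1,\dots,z_n,w_1,\dots,w_m$ over the period field \emph{provided} the constant part of the semiabelian family is trivial. When both $\lambda$ and $\mu$ are non-constant this holds and the argument runs as you sketch. But the theorem also covers $\mu=\mu_0\in\Qbar$ constant, where the constant part is $E_{\mu_0}^m$ and Bertrand only places the section in $H+E_{\mu_0}^m(\C)$; the desired algebraic independence can genuinely fail. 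The paper handles this by passing to the one-dimensional quotient $X=E_{\mu_0}^m/Z$ attached to $\bo{b}$, carrying the torsion-coset data (with controlled denominators) through a definable \emph{family} parametrised by the direction of $\bo{b}$, and applying Bertrand to $E_\lambda^n\times X$ instead. This reduction, together with the uniformity of the Habegger--Pila count over the family, is the main technical content beyond \cite{linrel}, and your outline does not account for it.
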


In case $n=m=1$, the theorem says that there are at most finitely many points on the curve $\cC$ such that $P_1$ and $Q_1$ are simultaneously of finite order on the respective specialized elliptic curves. This is nothing but the Proposition on p. 120 of \cite{MZ14a}. Actually, Masser and Zannier deal also with the case of a curve $\cC$ not defined over the algebraic numbers. Note that, if $\lambda$ and $\mu$ are both constant on $\cC$ and $n=m=1$, then the conclusion of the theorem is a special case of Raynaud's Theorem \cite{Rayn}, also known as the Manin-Mumford Conjecture.

For general $n$ and $m$, in the case of two constant elliptic curves defined over the algebraic numbers, the theorem follows from the recent work \cite{HabPila14} of Habegger and Pila. Therefore, we can suppose that at least one of the the two parameters, say $\lambda$, is non-constant and that $R_1\cong \Z$.\\

We also obtain a similar result for the fibered product of $n$ copies of $E_\lambda$ with $\Gm^m=(\C^\times)^m$.
We consider a curve $\cC\subseteq \mathbb{A}^{2n+1}\times \Gm^m$ with coordinate functions $$(x_1,y_1,\dots ,x_{n}, y_{n}, \lambda,u_1, \dots , u_{m} ),$$ with $\lambda$ non-constant, such that, for every $i=1,\dots , n $, the points $P_i=(x_i,y_i)$ lie on $E_\lambda$ as above. As the point $\bo{c}$ varies on the curve $\cC$, the $u_j (\bo{c})$ will be non-zero complex numbers.

\begin{theorem} \label{mainthm2}
Let $\cC\subseteq \mathbb{A}^{2n+1}\times \mathbb{G}_\emph{m}^m$ be an irreducible curve defined over $\Qbar$ with coordinate functions $(x_1,y_1,\dots ,x_{n}, y_{n}, \lambda,u_1, \dots , u_{m} )$, $\lambda$ non-constant, such that, for every $i=1,\dots , n $, the points $P_i=(x_i,y_i)$ lie on $E_\lambda$. Suppose moreover that no generic non-trivial relation among $P_1,\dots , P_n$ holds and that the $u_1, \dots , u_{m}$ are generically multiplicatively independent.
Then, there are at most finitely many $\bo{c}\in \cC(\C)$ such that there exist $(a_1,\dots, a_n) \in \Z^n\setminus \{0 \}$ and $( b_1, \dots , b_m) \in \Z^m \setminus \{0 \}$ for which
\begin{equation*}\label{rel2}
a_1P_1(\bo{c})+\dots +a_nP_n(\bo{c})= O \text{  and     } u_1(\bo{c})^{b_1}\cdots u_m(\bo{c})^{b_m}=1 .
\end{equation*}
\end{theorem}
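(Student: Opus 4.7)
The strategy is the Pila--Zannier method extended to the mixed setting of $E_\lambda^n\times\Gm^m$, mirroring the approach of the authors' previous work \cite{linrel} and of Masser--Zannier for the pure elliptic case. Since $\lambda$ is non-constant we have $R_1\cong\Z$, so the first sought relation is a $\Z$-linear dependence. I would pass to a desingularization of $\cC$ and fix a simply connected subset $U_0$ of its analytification on which $\lambda$ avoids $\{0,1,\infty\}$; using the Legendre periods $\omega_1(\lambda),\omega_2(\lambda)$, lift each $P_i$ to a holomorphic elliptic logarithm $z_i$, and write $u_j=\exp(2\pi i\, w_j)$ for holomorphic $w_j$. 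A fibre $\bo{c}\in U_0$ then satisfies the two relations precisely when there exist integer vectors $\bo{a}\in\Z^n\setminus\{\bo{0}\}$ and $\bo{b}\in\Z^m\setminus\{\bo{0}\}$ with $\sum a_i z_i(\bo{c})\in\Z\omega_1(\bo{c})+\Z\omega_2(\bo{c})$ and $\sum b_j w_j(\bo{c})\in\Z$. Via the Betti decomposition $z_i=\alpha_i\omega_1+\beta_i\omega_2$ together with the real/imaginary decomposition of the $w_j$, this translates into the real-analytic map $\beta\colon U_0\to\R^{2n+m}$ taking rational values at $\bo{c}$, with denominators dividing $\max(\|\bo{a}\|,\|\bo{b}\|)$.

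On a bounded semialgebraic portion of $U_0$ this map is definable in the o-minimal structure $\R_{\mathrm{an,exp}}$, so the Pila--Wilkie counting theorem yields an upper bound $\ll_\varepsilon T^\varepsilon$ for the number of rational points of height $\le T$ in the transcendental part of its image. On the other hand, for each putative $\bo{c}_0\in\cC(\Qbar)$ realising the two relations with size $T$, Silverman's specialization theorem applied to the generic points $P_i$ (together with Zimmer's comparison between N\'eron--Tate and Weil heights), combined with the fact that toric torsion has Weil height $0$, produces polynomial bounds on both the degree and the height of $\bo{c}_0$ in terms of $T$. Consequently, each such $\bo{c}_0$ has $\gg T^{\delta}$ Galois conjugates, and each conjugate contributes, after passing to one of finitely many fundamental domains, a rational point on $\beta(U_0)$ of height $\ll T^{\kappa}$ for explicit $\delta,\kappa>0$. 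If infinitely many $\bo{c}_0$ existed, this polynomial lower bound would contradict Pila--Wilkie, unless all those points fell inside the algebraic part of $\beta(U_0)$, that is, on a real semialgebraic arc.

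The main obstacle is precisely excluding this algebraic part, and this is where a functional transcendence statement is indispensable: a semialgebraic arc in $\beta(U_0)$ along which the Betti map is affine must be forced by a proper algebraic subgroup of $E_\lambda^n\times\Gm^m$ containing the section $(P_1,\dots,P_n,u_1,\dots,u_m)$. Because $E_\lambda$ is non-constant and admits no isogeny to $\Gm$, the relevant Ax--Schanuel / Ax logarithmic input (the elliptic version \`a la Bertrand--Masser--Pillay for the $E_\lambda^n$ factor together with classical Ax for the $\Gm^m$ factor) decouples the elliptic and multiplicative components; the hypothesis that there is no generic relation among the $P_i$ on $E_\lambda$ and no generic multiplicative relation among the $u_j$ then rules out such an arc. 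Combining this with the Pila--Wilkie/Galois dichotomy forces finiteness. The bulk of the work is the careful o-minimal setup and the hybrid specialization/height estimates for the mixed factor; the functional-transcendence step is the most delicate ingredient, and is what distinguishes Theorem \ref{mainthm2} from the purely elliptic Theorem \ref{mainthm}.
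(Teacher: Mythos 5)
Your outline follows the right general strategy (Pila--Zannier), but two of its central steps do not work as stated. First, the counting step. You claim that at a point $\bo{c}$ satisfying the two relations the Betti-type map $\beta$ ``takes rational values with denominators dividing $\max(\|\bo{a}\|,\|\bo{b}\|)$''. This is only true when $n=m=1$ (the torsion case of Masser--Zannier and Bertrand--Masser--Pillay--Zannier). For $n\geq 2$ a relation $a_1P_1(\bo{c})+\dots+a_nP_n(\bo{c})=O$ does not make the individual coordinates $p_i,q_i$ rational; it only forces the two specific combinations $\sum a_ip_i$ and $\sum a_iq_i$ to be integers (and similarly for $\sum b_js_j$ on the toric side). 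So the points you want to count are not rational points of the surface $\beta(U_0)$, and plain Pila--Wilkie does not apply. What is needed is a count of points of the definable surface lying on the union of linear subvarieties with rational coefficients of height at most $T$; this is exactly what Corollary 7.2 of Habegger--Pila provides, and it is the tool used in the paper (Proposition 3.3 and Lemmas 4.3, 4.5). Relatedly, your deduction of the lower bound is inverted: Silverman bounds the height of $\bo{c}_0$ absolutely (not in terms of $T$), and one must separately bound the size $T$ of a \emph{smallest} relation vector polynomially in the degree $d_0=[k(\bo{c}_0):k]$, using Masser's and David's results on small generators of the relation lattices (Lemmas 5.1 and 5.2 of the paper); without this step the comparison of the two counts does not close.

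Second, the functional transcendence step. You assert that generic multiplicative independence of $u_1,\dots,u_m$ together with Ax's theorem rules out the algebraic part. But multiplicative independence of the $u_j$ does \emph{not} imply algebraic independence of their logarithms $w_1,\dots,w_m$ over $\C(f,g)$: for instance $u_1=2u_2$ makes $w_1-w_2$ constant while leaving the $u_j$ multiplicatively independent. In the language of Bertrand's Th\'eor\`eme L (Lemma 2.1 of the paper), the curve may lie in a \emph{non-torsion constant translate} of a proper subgroup of $\Gm^m$, and then the hypothesis of algebraic independence simply fails. The paper circumvents this by working, for each fixed relation vector $\bo{b}$, with the quotient $X=\Gm^m/Z$ ($Z$ the codimension-one subtorus attached to $\bo{b}$), applying Bertrand's theorem to $G=E_\lambda^n\times X$ with constant part $\{O\}\times X$, and concluding that a putative semialgebraic arc would force $(u_1,\dots,u_m)$ into a fixed torsion coset $RZ$, which \emph{does} contradict generic multiplicative independence (after multiplying $\bo{b}$ by the order of $R$). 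Your ``decoupling'' argument skips precisely this reduction, which is the genuinely delicate point distinguishing cases (2) and (3) from case (1) in the paper.
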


Here, the case $n=m=1$ ($P_1$ torsion and $u_1$ a root of 1) follows from work of Bertrand, Masser, Pillay and Zannier \cite{BMPZ}.
In some special cases, Habegger, Jones and Masser \cite{HJM} recently gave an effective (but not explicit) bound for the degree of the set of ``special'' points, while in some more specific cases Stoll \cite{Stoll} proved emptiness, e.g., there is no root of unity $\lambda_0\neq 1$ such that $\left(2,\sqrt{2(2-\lambda_0)}\right)$ is torsion on $E_{\lambda_0}$.\\

Let us see a few examples. Consider the points 
\begin{align*}
P_1(\lambda)=\left(2,\sqrt{2(2-\lambda)} \right), \qquad P_2(\lambda)=\left(3,\sqrt{6(3-\lambda)} \right) ,
\end{align*}
on $E_\lambda$ and 
\begin{align*}
Q_1(\lambda)=\left(2,\sqrt{2(2+\lambda)} \right), \qquad Q_2(\lambda)=\left(3,\sqrt{6(3+\lambda)} \right) ,
\end{align*}
on $E_{-\lambda}$. The two elliptic curves $E_\lambda$ and $E_{-\lambda}$ are not identically isogenous. In fact, if they were, each $j$-invariant would be integral over the ring generated by the other over $\C$ and it is easy to prove that this is not the case (see Section 12 of \cite{MZ14a}). Moreover, $P_1$ and $P_2$ are not identically dependent on $E_{\lambda}$. Indeed, since these two points are defined over disjoint quadratic extensions of $\Qbar(\lambda)$, by conjugating one can see that the existence of a relation would imply that the points are identically of finite order on $E_\lambda$ and this is not the case (see p.68 of \cite{Zannier}). For the same reason $Q_1$ and $Q_2$ are not identically dependent on $E_{-\lambda}$. Theorem \ref{mainthm} then implies that there are at most finitely many complex $\lambda_0$ such that there are $(a_1,a_2),(b_1,b_2)\in \Z^2 \setminus \{0\}$ with $a_1P_1(\lambda_0)+a_2 P_2(\lambda_0)= O$ on $E_{\lambda_0}$ and $b_1Q_1(\lambda_0)+b_2 Q_2(\lambda_0)= O$ on $E_{-\lambda_0}$.

Now, consider $E_{-1}$. This is an elliptic curve with complex multiplication by the gaussian integers $\Z[i]$. Let $P_1(\lambda)$ and $P_2(\lambda)$ be as in the example above and let 
\begin{align*}
Q_1(\lambda)=\left(\lambda,\sqrt{\lambda(\lambda-1)(\lambda+1)} \right), \qquad Q_2(\lambda)=\left(2\lambda,\sqrt{2\lambda(2\lambda-1)(2\lambda+1)} \right),
\end{align*}
on $E_{-1}$. The two points $Q_1$ and $Q_2$ are not identically dependent on $E_{-1}$. Indeed, they are defined over disjoint quadratic extensions and they are not identically torsion.
Therefore, Theorem \ref{mainthm} implies that there are at most finitely many complex $\lambda_0$ such that there are $(a_1,a_2)\in \Z^2 \setminus \{0\}$ and $(b_1,b_2)\in \Z[i]^2\setminus\{0\}$ with $a_1P_1(\lambda_0)+a_2 P_2(\lambda_0)= O$ on $E_{\lambda_0}$ and $b_1Q_1(\lambda_0)+b_2 Q_2(\lambda_0)= O$ on $E_{-1}$.

Finally, let $P_1$ and $P_2$ be as above. Then, Theorem \ref{mainthm2} implies that there are at most finitely many complex $\lambda_0$ such that there are $(a_1,a_2),(b_1,b_2)\in \Z^2 \setminus \{0\}$ with $a_1P_1(\lambda_0)+a_2 P_2(\lambda_0)= O$ on $E_{\lambda_0}$ and $\lambda_0^{b_1} (\lambda_0-1)^{b_2}=1$.\\

In general, there are infinitely many $\bo{c}_0$ such that $P_1(\bo{c}_0), \dots , P_n(\bo{c}_0)$ are dependent on $E_{\lambda(\bo{c}_0)}$. For instance, any $P_i$ specializes to a torsion point for infinitely many $\bo{c}_0$, see \cite{Zannier}, p.~92. On the other hand, a well-known theorem of Silverman \cite{Sil83} implies that the absolute Weil height of such points is bounded. A direct effective proof of this can be found in Masser's Appendix C of \cite{Zannier}. In particular, there are at most finitely many $\bo{c}_0$ yielding one relation and defined over a given number field or of bounded degree over $\Q$.

The proof of our Theorems follows the general strategy introduced by Pila and Zannier in \cite{PilaZannier} and used by Masser and Zannier in various articles \cite{MasserZannier08}, \cite{MasserZannier10}, \cite{MZ12} and \cite{MZ14a} and by the authors in \cite{linrel}. In particular, we consider the elliptic logarithms $z_1, \dots , z_n$ of $P_1 , \dots , P_n$ and $w_1, \dots ,w_m$ of $Q_1, \dots ,Q_m$ (or the principal determination of the standard logarithms of $u_1, \dots , u_m$ in the $\Gm$ case) and the equations
$$
z_i=p_if+q_ig,  \quad   w_j=r_j h + s_j k,
$$
for $i=1,\dots, n$ and $j=1,\dots , m$, where $f$ and $g$ are suitably chosen basis elements of the period lattice of $E_\lambda$ and $h$ and $k$ basis elements for the period lattice of $E_\mu$ (or $h=1$ and $k=2\pi i $ for $\Gm$). If we consider the real coordinates $p_i, q_i, r_j, s_j$ as functions of a local uniformizer on a compact disc $D$, the image of these functions in $\R^{2n+2m}$ is a subanalytic surface $S$. The points of $\cC$ that yield two relations will correspond to points of $S$ lying on linear varieties defined by equations of some special form and with integer coefficients.
Now, we use a recent result of Habegger and Pila \cite{HabPila14} building on an earlier work of Pila \cite{Pila11}, which in turn is a refinement of the Pila-Wilkie Theorem \cite{PilaWilkie}, to obtain an upper bound of order $T^\epsilon$ for the number of points of $S$ lying on subspaces of the special form mentioned above and rational coefficients of height at most $T$, provided the $z_i$ and the $w_j$ are algebraically independent. This is ensured by a result of Bertrand \cite{Ber09}, in case our curve $\cC$ is not contained in a translate of a proper algebraic subgroups by a constant point. This is always the case in the setting of Theorem \ref{mainthm} if both $\lambda$ and $\mu$ are non-constant. On the other hand, if $\mu=\mu_0$ is constant or we are in the setting of Theorem \ref{mainthm2}, our curve might be contained in a non-torsion translate of a proper algebraic subgroup (e.g., we might have 
$Q_1 \in E_{\mu_0}(\Qbar)$ of infinite order). In this case, we are able to prove the same estimate essentially by reducing to the case $m=1$.

Now, to conclude the proof, we use works of Masser \cite{Masser88}, \cite{Masser89} and David \cite{David97} and exploit the boundedness of the height to show that the number of points of $S$ considered above is of order at least $T^\delta$ for some $\delta>0$. Comparing the two estimates leads to an upper bound for $T$ and thus for the coefficients of the two relations, concluding the proof.

Our Theorem \ref{mainthm2} does not deal with the case of $\lambda$ constant on $\cC$ since
Silverman's bounded height Theorem requires $\lambda$ not to be constant. On the other hand, a result of Bombieri, Masser and Zannier \cite{BMZ99} gives boundedness of the height in case the $u_j$ are independent modulo constants, while Viada \cite{Viada2003} proved the analogous result for a constant elliptic curve $E$ defined over the algebraic numbers. Therefore, our proof goes through in the constant case, unless $(P_1,\dots , P_n)$ and $(u_1, \dots, u_m)$ are both contained in a non-torsion translate of an algebraic subgroup of $E^n$ and $\Gm^m$, respectively. \\

We now formulate a statement in scheme theoretic terms and in the flavor of the so-called Zilber-Pink conjectures.
Let $S$ be an irreducible non-singular quasi-projective curve defined over a number field $k$. Fix non-negative integers $l,p,q$, and positive integers $n_1,\dots, n_l, m_1, \dots ,m_p$. For $i=1,\dots , l$, let $\mathcal{E}_i \rightarrow S$ be non-isotrivial elliptic schemes such that the generic fibers are pairwise non-isogenous. By non-isotrivial we mean that it cannot become a constant family after a finite \'etale base change.
Now, for $i=1, \dots , l$ we let $\mc{A}_i$ be the $n_i$-fold fibered power of $\cE_i$ over $S$.
Let $E_1, \dots ,E_p$ be elliptic curves defined over $k$ which are pairwise non-isogenous. We consider these and the multiplicative group $\mathbb{G}_m^q$ as constant families over $S$, i.e., we call $E_j$ and $\mathbb{G}_m^q$ the fibered products $E_j\times_k S$ and $\mathbb{G}_m^q\times_k S$ respectively.
Finally we let $\mc{A}$ be the fibered product
$$
\mc{A}_1\times_S \dots \times_S \mc{A}_l \times_S E_1^{m_1}  \times_S\dots \times_S E_{p}^{m_p}\times_S \mathbb{G}_m^q
$$
 over $S$. This is a semiabelian scheme over $S$. We call $\pi$ the structure morphism $\cA \rightarrow S$.

A subgroup scheme $G$ of $\mc{A}$ is a closed subvariety, possibly reducible, which contains the image of the zero section $S\rightarrow \mc{A}$, is mapped to itself by the inversion morphism and such that the image of $G\times_S G$ under the addition morphism is in $G$.
A subgroup scheme $G$ is called flat if $\pi_{|_G} : G \rightarrow S$ is flat, i.e., all irreducible components of $G$ dominate the base curve $S$ (see \cite{Hart}, Proposition III 9.7).

We can now state the following theorem, which is a very special case of a conjecture of Pink \cite{Pink}, Conjecture 6.1.

\begin{theorem}\label{thmscheme}
Let $\mc{A}$ be as above and suppose that either $p$ or $q$ equal 0. Let $\mc{A}^{\{2\}}$ be the union of its flat subgroup schemes of codimension at least 2. Let $\cC$ be a curve in $\mc{A}$ defined over $\Qbar$ and suppose $\pi(\cC)$ dominates $S$. Then $\cC\cap \mc{A}^{\{2\}}$ is contained in a finite union of flat subgroup schemes of positive codimension.
\end{theorem}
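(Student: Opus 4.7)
The plan is to deduce Theorem \ref{thmscheme} from Theorems \ref{mainthm} and \ref{mainthm2} together with the main result of \cite{linrel} (two independent relations on a single non-isotrivial elliptic family), the Habegger--Pila theorem \cite{HabPila14} (Zilber--Pink for curves in products of constant elliptic curves) and the classical Bombieri--Masser--Zannier theorem \cite{BMZ99} for tori. The structural observation driving the reduction is that, since the $\cE_i$ are pairwise non-isogenous with each other and with each $E_j$, and none is isogenous to $\Gm$, every flat subgroup scheme $G$ of $\cA$ splits as a fibered product
\[
G = G_1 \times_S \cdots \times_S G_l \times_S H_1 \times_S \cdots \times_S H_p \times_S T,
\]
with $G_i\subseteq \cA_i$, $H_j\subseteq E_j^{m_j}$, $T\subseteq \Gm^q$ each flat, and with $\mathrm{codim}(G)$ equal to the sum of the codimensions of the factors.

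If $\cC$ is contained in some proper flat subgroup scheme $H$ of $\cA$, the conclusion already holds with the singleton $\{H\}$, so I may assume that $\cC$ is not contained in any proper flat subgroup scheme of $\cA$, and then it suffices to prove that $\cC\cap\cA^{\{2\}}$ is finite. For $\bo{c}\in\cC\cap\cA^{\{2\}}$, the splitting above forces $\bo{c}$ to lie in some $G$ of one of finitely many types, each specified by either (i) a single factor of codimension at least 2, or (ii) two distinct factors each of codimension at least 1. For each such type, I project $\cC$ onto the corresponding one or two factors. The image is a curve dominating $S$ (since $\pi(\cC)$ does) and is not contained in any proper flat subgroup scheme of its target, for otherwise the pullback would provide a proper flat subgroup scheme of $\cA$ containing $\cC$, against our standing assumption. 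In particular, the projected points are generically independent inside each relevant factor.

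I then invoke the appropriate finiteness result in each type: \cite{linrel} for a single $\cA_i$ of codimension $\geq 2$; Theorem \ref{mainthm} for two $\cA_i$'s, or for one $\cA_i$ paired with a constant $E_j^{m_j}$ (the constant-$\mu$ case being allowed by Theorem \ref{mainthm}); Theorem \ref{mainthm2} for one $\cA_i$ paired with $\Gm^q$; and \cite{HabPila14} together with \cite{BMZ99} for the remaining purely constant configurations. The hypothesis that $p=0$ or $q=0$ is used precisely to exclude the residual case of a codimension-$\geq 1$ relation in some $E_j^{m_j}$ together with a codimension-$\geq 1$ relation in $\Gm^q$, which would require a Zilber--Pink statement for a mixed constant abelian-plus-toric variety not provided by \cite{HabPila14}. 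Pulling back the finitely many image points along the generically finite projection $\cC\to\pi_\ast(\cC)$ gives finiteness of the contribution of each type, and taking the union over the finite list of types concludes.

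The main technical obstacle I foresee is that Theorems \ref{mainthm} and \ref{mainthm2} are stated for the Legendre family, whereas each $\cE_i$ is an arbitrary non-isotrivial elliptic scheme; this will be handled by passing to a finite \'etale cover of $S$ on which each $\cE_i$ admits a Legendre model, proving the statement there, and descending the finiteness. Alongside this, one must verify case by case that the hypothesis of no generic relation in each single factor of the projection is inherited from the generic fullness of $\cC$ in $\cA$, a bookkeeping that the splitting of flat subgroup schemes reduces to a routine argument.
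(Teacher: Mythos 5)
Your proposal follows the same route as the paper: split flat subgroup schemes into factors (the paper gets this from its Lemma \ref{lemmat}, realizing every flat subgroup scheme of codimension $d$ inside the kernel of a rank-$d$ block-diagonal matrix, and noting each is then contained in a fibered product of subgroup schemes of the individual blocks), reduce to configurations involving one or two blocks, pass to Legendre models by a finite base change (the paper's Lemmas \ref{lemdiag} and \ref{lemhab}), and quote the appropriate finiteness theorem block by block. The case analysis, the role of the hypothesis $p=0$ or $q=0$, and the descent of finiteness along the quasi-finite maps all match the paper.

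The one genuine gap is your treatment of the purely toric configuration, i.e.\ a flat subgroup scheme whose codimension $2$ sits entirely in the $\Gm^q$ block. There you invoke \cite{BMZ99}, but the finiteness theorem of Bombieri--Masser--Zannier requires the curve in $\Gm^q$ not to be contained in any translate (torsion or not) of a proper subtorus. Your standing assumption only guarantees that the projection of $\cC$ to $\Gm^q$ is not contained in a proper algebraic subgroup, i.e.\ not in a torsion coset; it may perfectly well lie in a non-torsion translate of a subtorus, and this situation does occur. The correct reference, and the one the paper uses, is Maurin's theorem \cite{Maurin}, which removes the coset hypothesis. For the analogous configuration with codimension $2$ concentrated in a single constant abelian block $E_j^{m_j}$, the paper likewise quotes Viada \cite{Viada2008} and Galateau \cite{galateau2010} rather than \cite{HabPila14} alone, for the same reason: the projection of $\cC$ to that block need only avoid proper algebraic subgroups, not arbitrary cosets. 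With these citations repaired, your argument is the paper's proof.
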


In Section \ref{proofsch} we will see how this theorem is a consequence of our two main Theorems and the previous works \cite{linrel}, \cite{HabPila14}, \cite{Viada2008}, \cite{galateau2010} and \cite{Maurin}.

\section{Preliminaries}\label{periods}

We consider a smooth algebraic curve $S/\C$ and its function field $K=\C(S)$. Let $A$ be an abelian variety defined over $K$ and let $T$ be a torus, $T\cong \Gm^m$. We assume that the largest abelian variety $A_0$, defined over $\C$ and isomorphic over $\overline{K}$ to an abelian subvariety of $A$, is embedded in $A$, and call it the constant part, or $\C$-trace, of $A$. Consider now $G=T\times A$ and set $G_0=T \times A_0$. Here and in the sequel, when necessary we will tacitly restrict $S$ to a non empty open subset which we will still denote by S. Then $G$ defines a family of semiabelian varieties, which we indicate by $G\rightarrow S$. 

We are going to consider our geometrical objects as analytic. When doing so we use the upper index $^{an}$.

Now, our family $G\rightarrow S$ defines an analytic sheaf $G^{an}$ of Lie groups over the Riemann surface $S^{an}$ and its relative Lie algebra $Lie (G)/S$ defines an analytic sheaf $Lie (G^{an})$ over $S^{an}$. Fix a $\Lambda \subseteq S(\C)$ homeomorphic to a closed disk. We have the following exact sequence of analytic sheaves over $\Lambda$ 
$$
0 \longrightarrow \Pi_G \longrightarrow Lie (G^{an}) \xrightarrow{\exp_G}  G^{an} \longrightarrow 0,
$$
see Appendix E of \cite{BerPil}.

We fix a basis for the local system of periods $\Pi_G$ and call $F$ the field generated over $K$ by such basis.
For a local section $\bo{x} \in Lie (G^{an})$ we denote by $\bo{y}=\exp_G(\bo{x})$ its image in $G^{an}$.

\begin{lemma}\label{transc}
Let $\bo{x}\in Lie (G^{an})$ and $\bo{y}=\exp_G(\bo{x})$. Assume moreover that $\bo{y}$ is a $K$-rational point of $G$.
Then, if $\text{tr.deg}_F F(\bo{x})< \dim G$, there exists $H$, a proper algebraic subgroup of $G$, such that $\bo{y} \in H+G_0(\C)$.
\end{lemma}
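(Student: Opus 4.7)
The plan is to deduce the lemma as a direct contrapositive of the relative Ax--Schanuel theorem for semiabelian schemes over function fields established by Bertrand \cite{Ber09} (generalizing Ax's classical theorem and its abelian and semiabelian extensions). That theorem asserts that, for a local section $\bo{x}$ of $\mathrm{Lie}(G^{an})$ over $\Lambda$ whose exponential $\bo{y} = \exp_G(\bo{x})$ is $K$-rational, the transcendence degree $\mathrm{tr.deg}_F F(\bo{x})$ equals exactly $\dim G$ unless $\bo{y}$ lies in a translate $H + \bo{c}$ of a proper algebraic subgroup $H \subsetneq G$ by a constant point $\bo{c} \in G_0(\C)$. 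The hypothesis $\mathrm{tr.deg}_F F(\bo{x}) < \dim G$ then immediately forces the exceptional case to occur, producing the desired $H$.

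To set this up rigorously, I would work over the simply connected disk $\Lambda$ where the exponential map and a period basis trivialize and $F$ is well-defined. Writing $G = T \times A$ with $T \cong \Gm^m$ split and $A$ an abelian variety, the period lattice splits as $\Pi_G = \Pi_T \oplus \Pi_A$, with $\Pi_T$ spanned by $2\pi i$ in each factor, so $F$ is the compositum of $K(2\pi i)$ with the period field of $A$. Since $T$ is split, it coincides with its own constant part, and hence the constant part of $G$ is $G_0 = T \times A_0$ as stated. Bertrand's theorem then applies directly; its internal proof combines the classical Ax--Schanuel for $\Gm$ with Ax's theorem for the abelian factor, glued via the product structure.

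The main obstacle, as I see it, is not the transcendence input but rather reconciling the setup here with Bertrand's framework: one must confirm that the field $F$ generated by the periods matches Bertrand's convention, that the subgroup $H$ produced can be taken as a genuine algebraic subgroup defined over $K$ (rather than merely analytically), and that the $G_0(\C)$ correction in the conclusion absorbs exactly the ambiguity coming from constant sections of $G$ whose logarithms are algebraic over the periods. Each of these is routine for the split product $G = T \times A$, and once checked the lemma is essentially a restatement of the relevant case of Bertrand's theorem.
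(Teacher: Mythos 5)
Your overall strategy --- deducing the lemma from Bertrand's transcendence theorem in \cite{Ber09} --- is the same as the paper's, and the secondary issues you flag at the end (matching the period field $F$ with Bertrand's conventions, algebraicity of $H$, the role of the $G_0(\C)$ translate) are indeed the right things to worry about; the paper disposes of the $K$-versus-$F$ point by referring to the explicit formula at the beginning of p.~2786 of \cite{Ber09}. However, there is a genuine gap in how you invoke the cited result. Th\'eor\`eme~L of \cite{Ber09} is \emph{not} stated for $G=T\times A$: it is stated for $T\times\tilde{A}$, where $\tilde{A}$ is the universal vectorial extension of $A$. Your claim that it applies ``directly'' to the split product $T\times A$, with an internal proof obtained by gluing Ax--Schanuel for $\Gm$ with Ax's theorem for the abelian factor, misrepresents the source, and as a consequence the actual content of the proof is missing from your write-up.

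Concretely, to pass from $\tilde{A}$ back to $A$ one must: (i) lift $\bo{y}$ and $\bo{x}$ to a $K$-rational point of $\tilde{G}=T\times\tilde{A}$ and to a logarithm in $Lie(\tilde{G}^{an})$, using functoriality of the exponential morphisms and the fact that $K$-rational points of $A$ and of $Lie(A)$ lift to $K$-rational points of $\tilde{A}$ and of $Lie(\tilde{A})$; (ii) carry out a dimension count: the lift $\tilde{\bo{x}}$ has only $\dim A$ additional coordinates, so $\text{tr.deg}_F F(\tilde{\bo{x}})\leq \text{tr.deg}_F F(\bo{x})+\dim A<\dim G+\dim A=\dim\tilde{G}$, which is precisely what makes Th\'eor\`eme~L applicable to $\tilde{G}$; and (iii) verify that the proper algebraic subgroup $\tilde{H}\subsetneq\tilde{G}$ it produces has proper image in $G$, which rests on the fact that any algebraic subgroup of $\tilde{A}$ projecting onto $A$ must fill up all of $\tilde{A}$. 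Without (i)--(iii) the hypotheses of the theorem you are citing are not verified for the object you apply it to, so as written the proposal does not yet constitute a proof.
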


\begin{proof}
This is a consequence of Th\'eor\`eme L of \cite{Ber09} (see also \cite{Ber11}). The theorem is stated for $G=T\times \tilde{A}$, where $\tilde{A}$ is the universal vectorial extension of $A$. The claim follows by the functoriality of the exponential morphisms, by the fact that $K$-rational points of $A$ and $Lie (A)$ can be lifted to $K$-rational points of $\tilde{A}$ and $Lie (\tilde{A})$ and by a dimension count. Moreover, any algebraic subgroup of $\tilde{A}$ projecting onto $A$ must fill up $\tilde{A}$. Finally, to see that $K$ can be replaced by $F$ in Th\'eor\`eme L, one must look at the formula at the beginning of page 2786.
\end{proof}

We consider $E_\lambda$ as a family over $Y(2)=\mathbb{P}^1 \setminus \{0,1, \infty \}$.
By abuse of notation we indicate by $E_\lambda^n$ the fibered product over $Y(2)$ of $n$ copies of $E_\lambda$.

Our theorems deal with a curve $\cC$ inside a family of semi-abelian varieties $G$ of the following three types:
\begin{enumerate}
\item $G= E^n_\lambda \times E^m_\mu$ with $\lambda$ and $\mu$ both non-constant;
\item $G= E^n_\lambda \times E^m_\mu$ with $\lambda$ non-constant and $\mu =\mu_0 \in \Qbar$;
\item $G= E^n_\lambda \times \Gm^m$ with $\lambda$ non-constant.
\end{enumerate}

For the rest of the paper we will refer to these as cases (1), (2) and (3).

In the first two cases our family has basis $Y(2)\times Y(2)$, but, since we must have a one-dimensional basis in order to apply Lemma \ref{transc}, we will restrict it to $\pi(\cC)$, where $\pi : E^n_\lambda \times E^m_\mu \rightarrow Y(2)\times Y(2) $ is the structural morphism. 

Now, we let $\widehat{\cC}$ be the set of points $\bo{c} \in \cC(\C)$ that do not map to singular points of $\pi(\cC)$, that are not ramified points of $\pi_{|_\cC}$ and such that $\lambda, \mu \neq 0,1$ and $ x_1,\dots ,x_n \neq 0,1, \lambda$ and, in cases (1) and (2), $ u_1, \dots , u_m \neq 0,1,\mu $ on $\bo{c}$. In this way we remove only finitely many algebraic points of $\cC$. We set $S=\pi ( \widehat{\cC})$ and $K=\C(S)$. We can then consider our family of semi-abelian varieties $G$ as a semi-abelian variety defined over the function field $K$.

We now recall a few facts about algebraic subgroups. The following is a well-known fact (see, for instance, Lemma 7 of \cite{MWZero}). 

\begin{lemma}\label{lemsubg1}
Consider the algebraic group $G=E_\lambda^n \times E_\mu^m \times \Gm^l$ and suppose $E_\lambda$ and $E_\mu$ are non-isogenous. Then, any algebraic subgroup of $G$ is of the form $H_1 \times H_2 \times H_3$, where $H_1$ is an algebraic subgroup of $E_\lambda^n$, $H_2$ of $E_\mu^m$ and $H_3$ of $\Gm^l$.
\end{lemma}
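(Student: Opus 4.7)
The plan is to reduce the claim to a vanishing-of-$\mathrm{Hom}$ statement between the three factors and then invoke a standard projection/splitting argument for commutative algebraic groups. Write $G = G_1 \times G_2 \times G_3$ with $G_1 = E_\lambda^n$, $G_2 = E_\mu^m$, $G_3 = \Gm^l$, and proceed by a two-step splitting: first peel off $G_1$, then split the remainder $G_2 \times G_3$.

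First I would check the non-existence of non-trivial homomorphisms between any two distinct factors. Since $E_\lambda$ and $E_\mu$ are not isogenous, $\mathrm{Hom}(E_\lambda, E_\mu) = 0$, hence, by reading off matrix entries, $\mathrm{Hom}(G_1, G_2) = \mathrm{Hom}(G_2, G_1) = 0$. Any morphism of algebraic groups $E_\lambda \to \Gm$ is constant because $E_\lambda$ is complete and $\Gm$ is affine; conversely any morphism $\Gm \to E_\lambda$ extends by the properness of $E_\lambda$ to a morphism $\mathbb{P}^1 \to E_\lambda$, which must be constant. The same holds with $E_\mu$ in place of $E_\lambda$. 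Therefore $\mathrm{Hom}(G_i, G_3) = \mathrm{Hom}(G_3, G_i) = 0$ for $i = 1, 2$.

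Next, the core splitting step: given $A$, $B$ commutative algebraic groups with $\mathrm{Hom}(A, B) = \mathrm{Hom}(B, A) = 0$ (on identity components, as needed) and an algebraic subgroup $H \subseteq A \times B$, set $H_A = H \cap (A \times \{0\})$ and $H_B = H \cap (\{0\} \times B)$, viewed as subgroups of $A$ and $B$ respectively. Let $p_A, p_B$ denote the projections. The two short exact sequences
\begin{equation*}
0 \to H_B \to H \to p_A(H) \to 0, \qquad 0 \to H_A \to H \to p_B(H) \to 0
\end{equation*}
yield a well-defined map $\varphi : p_A(H) \to p_B(H)/H_B$, sending $p_A(h)$ to $p_B(h) + H_B$, since any two preimages of a point in $p_A(H)$ differ by an element of $H \cap (\{0\} \times B) = H_B$. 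By the Hom vanishing of the previous step, applied via Poincar\'e reducibility to the identity components (which are isogenous to powers of pairwise non-isogenous $E_\lambda$, $E_\mu$, or $\Gm$), the map $\varphi$ vanishes. This forces $p_B(H) \subseteq H_B$, and combined with the trivial reverse inclusion gives $H_B = p_B(H)$; symmetrically $H_A = p_A(H)$. Together with the automatic chain $H_A \times H_B \subseteq H \subseteq p_A(H) \times p_B(H)$ this yields $H = H_A \times H_B$. Applying this once with $A = G_1$, $B = G_2 \times G_3$, and then again with $A = G_2$, $B = G_3$, delivers the three-factor product decomposition.

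The main obstacle lies in the passage from Hom vanishing between connected factors to the vanishing of $\varphi$: Poincar\'e reducibility disposes of the connected subquotients cleanly, but possible torsion components of $p_B(H)/H_B$ need extra care, since non-zero homomorphisms between finite torsion subgroups can exist even when the ambient groups have no non-trivial homomorphisms. In the applications of the lemma within this paper, the relevant subgroups arise from linear relations of the form $\sum a_i P_i = O$ and are connected, so this point does not bite; the general disconnected case is exactly what is packaged in Lemma~7 of \cite{MWZero}, and can be recovered from the connected case by a separate finite-index bookkeeping argument on the component groups.
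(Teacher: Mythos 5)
The paper offers no proof of Lemma~\ref{lemsubg1} at all --- it is quoted as a well-known fact with a pointer to Lemma~7 of \cite{MWZero} --- so there is no argument in the text to compare against step by step; yours is the standard self-contained one. The Hom-vanishing computations are correct, and the splitting mechanism (the subgroups $H_A$, $H_B$, the induced homomorphism $\varphi\colon p_A(H)\to p_B(H)/H_B$, and the deduction $H=H_A\times H_B$ once $\varphi=0$) is sound. What this establishes is the lemma for \emph{connected} algebraic subgroups, which is in fact all the paper needs: in Corollary~\ref{cortr} one may pass to the identity component, or simply use the trivial containment $H\subseteq p_1(H)\times p_2(H)\times p_3(H)$ together with the connected case applied to $H^0$ to see that one of the projections is proper.

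The genuine flaw is in your closing paragraph. You correctly locate the difficulty in the component groups, but your assertion that the disconnected case ``can be recovered from the connected case by a separate finite-index bookkeeping argument'' is false, because the lemma as literally stated fails for disconnected subgroups. Take $n=m=1$, $l=0$, let $P\in E_\lambda$ and $Q\in E_\mu$ be points of exact order $2$, and set $H=\{(O,O),(P,Q)\}$. This is an algebraic subgroup of $E_\lambda\times E_\mu$ of order $2$; were it of the form $H_1\times H_2$, then $H_1=p_1(H)=\{O,P\}$ and $H_2=p_2(H)=\{O,Q\}$, and $H_1\times H_2$ has four elements while $H$ has two. In your notation $H_B$ is trivial and $\varphi$ is the non-zero isomorphism between two groups of order $2$, so the vanishing step genuinely breaks and cannot be repaired by any bookkeeping. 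The correct statement --- and the one your argument proves --- requires $H$ to be connected (equivalently, one should weaken the conclusion to $H\subseteq H_1\times H_2\times H_3$, or apply the product decomposition to $H^0$). You should add that hypothesis rather than claim the general case.
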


Now, let $G=E_\lambda$, $E_{\mu_0}$ (with $\mu_0 \in \C$) or $\Gm$ and $R=End(G)$. We use the additive notation. 

Any $\bo{a} \in R^m$, induces an homomorphism
$$
\begin{array}{lll}
\bo{a}:& G^m& \rightarrow G\\
&(g_1, \dots ,g_m)& \mapsto a_1 g_1+ \cdots +a_m g_m
\end{array}
$$
and we indicate by $ker(\bo{a})$ the kernel of this homomorphism. The following is again a well-known fact (see Fact 5.2 of \cite{JKS} for a proof sketch).
 
\begin{lemma}\label{lemsubg2}
Let $H$ be a proper algebraic subgroup of $G^m$. Then, there exists $\bo{a} \in R^m\setminus \{ 0\}$ such that $H \subseteq ker (\bo{a})$. Moreover, $ker (\bo{a})$ is an algebraic subgroup of $G^m$ of codimension 1.
\end{lemma}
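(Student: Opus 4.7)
The plan is to first reduce to the case of a connected subgroup, then produce a nonzero homomorphism $\bo{a}_0 \in R^m$ annihilating the identity component, and finally clear a denominator so that the resulting element of $R^m$ kills all of $H$. Concretely, let $H^0$ be the identity component of $H$. Suppose we have found $\bo{a}_0 \in R^m \setminus \{0\}$ with $H^0 \subseteq \ker(\bo{a}_0)$. Then $\bo{a}_0(H)$ is the image of the finite group $H/H^0$, hence a finite subgroup of $G$; letting $N$ be its exponent, the element $\bo{a} = N\bo{a}_0 \in R^m \setminus \{0\}$ satisfies $H \subseteq \ker(\bo{a})$. So the core task is to produce $\bo{a}_0$.

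The construction of $\bo{a}_0$ splits according to the three possibilities for $G$. When $G = \Gm$, the proper connected algebraic subgroups of $\Gm^m$ are subtori, which by the character/cocharacter duality are precisely the common kernels of finite collections of characters; since $H^0$ is proper, at least one nontrivial character vanishes on it, and characters are exactly the nonzero elements of $\Z^m = \mathrm{Hom}(\Gm^m, \Gm) = R^m$. When $G$ is an elliptic curve (either $E_\lambda$ over $K$ or the constant $E_{\mu_0}$), the key input is that $G$ is simple, so $G^m$ is isotypic. By Poincar\'e complete reducibility (applicable over any field), there is an abelian subvariety $C \subseteq G^m$ with $H^0 + C = G^m$ and $H^0 \cap C$ finite; the isogeny $C \to G^m/H^0$ then shows $G^m/H^0$ is isogenous to $G^{m-\dim H^0}$, a positive power of $G$. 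Composing the projection $G^m \twoheadrightarrow G^m/H^0$ with any nontrivial map to $G$ produces a nonzero element of $\mathrm{Hom}(G^m, G) = R^m$ vanishing on $H^0$, as required.

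For the moreover statement, any $\bo{a} \in R^m \setminus \{0\}$ has some nonzero coordinate $a_i$; since $G$ is one-dimensional and $a_i \colon G \to G$ is a nonzero isogeny, hence surjective, the full map $\bo{a} \colon G^m \to G$ is surjective, so $\ker(\bo{a})$ has dimension $m - 1$ and hence codimension $1$ in $G^m$. The only real subtlety is the elliptic function-field case: I need to confirm Poincar\'e reducibility holds over the (not algebraically closed) field $K = \C(S)$, but this is a standard theorem for abelian varieties over arbitrary fields, and the endomorphism ring $R$ is the same one that figures in the statement. Beyond that, the argument is largely bookkeeping about passing from $H$ to $H^0$ and back via the $N$-multiplication trick.
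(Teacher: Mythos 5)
Your proof is correct. The paper itself does not prove this lemma --- it is quoted as a well-known fact with a pointer to Fact 5.2 of \cite{JKS} for a proof sketch --- and your argument (pass to the identity component, use character theory for $\Gm^m$ and Poincar\'e reducibility plus isotypicity for $E^m$ to get $\bo{a}_0\in\mathrm{Hom}(G^m,G)=R^m$ killing $H^0$, then multiply by the exponent of the finite image of $H/H^0$, with surjectivity of $\bo{a}$ giving codimension $1$) is exactly the standard one being referenced.
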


Now, set $G=E_{\mu_0}$ (with $\mu_0 \in \C$) or $\Gm$ and again $R=End(G)$. Let $\bo{a}\in R^m \setminus \{0\}$. Then, any $ker (\bo{a})$ is a finite union of cosets $v+H$, where $v=(v_1,\dots ,v_m)$ has finite order and $H$ is a connected proper subgroup of $G^m$ of codimension 1. For $g=(g_1, \dots ,g_m) \in G^m$ and $a \in R$, we use the notation $ag$ to indicate $(ag_1, \dots , a g_m)$.

\begin{lemma}\label{lemsubg3}
Let $\bo{a}\in R^m$ with $a_h\neq 0 $ for some $h \in \{1, \dots ,m \}$. Then each component of $ker (\bo{a})$ is a coset $v+H$ for some $v \in G^m$ with $a_h v=0$.
\end{lemma}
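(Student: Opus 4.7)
The plan is to show that every component $v+H$ of $\ker(\bo{a})$, where $H$ denotes the connected component of the identity, contains a representative $v'$ killed by $a_h$, so that $v+H = v'+H$ with $a_h v' = 0$. The crux of the argument is the intermediate claim that $a_h \cdot \ker(\bo{a}) \subseteq H$, equivalently that $a_h$ annihilates the finite component group $\ker(\bo{a})/H$.

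To prove this claim I would exhibit enough connected one-parameter subgroups inside $\ker(\bo{a})$ whose combined image recovers $a_h g$ for an arbitrary $g \in \ker(\bo{a})$. Concretely, for each $i \neq h$ define $\psi_i \colon G \to G^m$ sending $t$ to the tuple with $a_h t$ in the $i$-th slot, $-a_i t$ in the $h$-th slot, and zeros elsewhere; a direct check shows $\psi_i(G) \subseteq \ker(\bo{a})$, and since $G$ is connected each $\psi_i(G) \subseteq H$. Given $g=(g_1,\dots,g_m) \in \ker(\bo{a})$, summing $\psi_i(g_i)$ over $i \neq h$ produces a tuple whose $i$-th coordinate (for $i \neq h$) is $a_h g_i$ and whose $h$-th coordinate is $-\sum_{i \neq h} a_i g_i = a_h g_h$, the last equality coming from the defining relation $\bo{a}(g)=0$. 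This tuple is exactly $a_h g$, so $a_h g \in H$.

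Next I would verify that $a_h \colon H \to H$ is surjective. Since $a_h \neq 0$, multiplication by $a_h$ on $G^m$ is an isogeny with finite kernel, and by the preceding step it sends $H$ into itself; hence $a_h(H)$ is a connected subgroup of $H$ of the same dimension $m-1$ and therefore equals $H$.

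Combining the two ingredients, for any component $v+H$ the claim gives $a_h v \in H$, and surjectivity of $a_h|_H$ yields $h_0 \in H$ with $a_h h_0 = -a_h v$; then $v' := v + h_0$ lies in $v+H$ and satisfies $a_h v' = 0$, as required. The only step that requires a genuine idea is the construction of the $\psi_i$'s and the observation that their combined image reconstructs $a_h g$; the degenerate case $m=1$ is trivial because then $\ker(\bo{a})=G[a_h]$ is already finite and every element is annihilated by $a_h$. Everything else is formal.
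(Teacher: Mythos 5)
Your proof is correct, but it takes a genuinely different route from the paper's. The paper argues through the exponential sequence: it lifts $g=(g_1,\dots ,g_m)$ to $(z_1,\dots ,z_m)$ in $Lie(G^m)$, uses $a_h\neq 0$ to replace $z_h$ by the unique value making the tuple lie in $Lie(H)=\{a_1x_1+\dots +a_mx_m=0\}$, exponentiates to get $g'\in H$ agreeing with $g$ in every coordinate except the $h$-th, and then notes that $v=g-g'$ is supported only in coordinate $h$ and lies in $\ker(\bo{a})$, forcing $a_hv_h=0$ and hence $a_hv=0$. You instead prove the purely algebraic statement that $a_h$ annihilates the component group $\ker(\bo{a})/H$, via the explicit connected subgroups $\psi_i(G)\subseteq H$ whose sum over $i\neq h$ reconstructs $a_hg$ (using $\sum a_ig_i=0$), and then correct the representative using surjectivity of $a_h$ on $H$. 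All the steps check out: each $\psi_i$ lands in $\ker(\bo{a})$ because $R$ is commutative, its image is a connected closed subgroup and hence contained in the identity component $H$, and $a_h(H)=H$ because $a_h$ has finite kernel on $G$, so $a_h(H)$ is a connected closed subgroup of $H$ of full dimension. Your argument is slightly longer but avoids the exponential entirely, so it works over any algebraically closed field of characteristic zero and gives the marginally stronger conclusion that $a_h$ kills the whole component group; the paper's Lie-algebra computation is shorter and produces the representative $v$ directly, with the extra feature that it is supported in the single coordinate $h$.
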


\begin{proof}
We need to show that each component of $ker (\bo{a})$ contains a $v$ with $a_h v=0$.
Fix a component $g+H$ for $g=(g_1, \dots , g_m)$. The subgroup $H$ is connected and we can consider its Lie algebra $Lie(H)$ as a codimension 1 subspace of $Lie(G^m)$ defined by the equation $a_1 x_1+ \dots + a_m x_m=0$. Fix $z_1, \dots , z_m \in Lie (G)$ with $\exp_G(z_i)=g_i$. Now, since $a_h \neq 0$, there exists $(z_1', \dots , z_m') \in Lie (H)$ such that $z'_i=z_i$ for all $i \neq h$. Then, if $g'=\exp_{G^m}(z_1', \dots , z_m')=(g'_1, \dots, g'_m)$, we have that $g_i=g'_i$ for all $i \neq h$. Therefore, if we set $v=g-g'$, we have $v_i=0$ for all $i\neq h$, but $v \in ker (\bo{a})$. Thus, we have found our element $v \in g+H$ with $a_h v=0$.
\end{proof}

Now, choose $\bo{c}^* \in \widehat{\cC}$ and a neighborhood $N_{\bo{c}^*}$ of $\bo{c}^*$ on $\widehat{\cC}$, mapping injectively to $S$ via $\pi$. Let $D_{\bo{c}^*}$ be a subset of $\pi(N_{\bo{c}^*})$, containing $t^*:=\pi(\bo{c}^*)$ and homeomorphic (via a local analytic isomorphism) to a closed disc.

On $N_{\bo{c}^*}$, and therefore on $D_{\bo{c}^*}$, it is possible to define analytic $f,g,z_1,\dots ,z_n$ such that $\{f,g \}$ is a basis for the local system of periods $\Pi_{E_\lambda}$ and, for all $t \in D_{\bo{c}^*} $, we have $\exp_{E_{\lambda(\bo{c})}}(z_i(t))= P_i(\bo{c})$, where $\bo{c}$ is the unique point of $N_{\bo{c}^*} \cap \pi^{-1}(t)$. For this see Section 5 of \cite{linrel} or Section 3 of \cite{MZ14a}.

Analogously, we can define analytic $h,k,w_1,\dots ,w_m$ such that $\{h,k \}$ is a basis for the local system of periods $\Pi_{E_\mu}$ and we have $\exp_{E_{\mu(\bo{c})}}(w_j(t))= Q_j(\bo{c})$.

In case (3), $\Pi_{\Gm}$ has rank 1 and we choose $\{2 \pi i \}$ as a basis. We define $w_1,\dots ,w_m$ to be principal determination of the complex logarithm, i.e. $w_j(t)=\log \rho_j + 2 \pi i \theta_j$ where $u_j=\rho_j e^{2\pi i \theta_j}$ and $\theta_j \in [0,1)$.

\begin{corollary}\label{cortr}
In case (1), under the hypotheses of Theorem \ref{mainthm}, we have that $z_1, \dots ,z_n,$ $ w_1, \dots ,w_m$ are algebraically independent over $\C(f,g,h,k)$.
\end{corollary}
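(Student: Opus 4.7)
The plan is to apply Lemma \ref{transc} directly to the family $G = E_\lambda^n \times E_\mu^m$ over $S$, with $\bo{x} = (z_1,\dots,z_n,w_1,\dots,w_m)$ and $\bo{y} = \exp_G(\bo{x}) = (P_1,\dots,P_n,Q_1,\dots,Q_m)$, which is a $K$-rational section by construction. A key preliminary observation is that in case (1) both $\lambda$ and $\mu$ are non-constant on $\cC$, so neither $E_\lambda$ nor $E_\mu$ has a non-trivial $\C$-trace; consequently the constant part $G_0$ of $G$ vanishes and $G_0(\C) = \{O\}$.

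Suppose for contradiction that $z_1, \dots, z_n, w_1, \dots, w_m$ are algebraically dependent over $F = \C(f,g,h,k)$. Then $\text{tr.deg}_F F(\bo{x}) < n+m = \dim G$, and Lemma \ref{transc} produces a proper algebraic subgroup $H \subseteq G$ such that $\bo{y} \in H + G_0(\C) = H$. At this point I would invoke the structural description of subgroups: the non-isogeny of $E_\lambda$ and $E_\mu$ together with Lemma \ref{lemsubg1} forces $H = H_1\times H_2$ with $H_1\subseteq E_\lambda^n$ and $H_2\subseteq E_\mu^m$ algebraic subgroups, and properness of $H$ forces at least one of these to be proper in its factor. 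If $H_1$ is a proper subgroup of $E_\lambda^n$, Lemma \ref{lemsubg2} produces $\bo{a}\in R_1^n\setminus\{0\}$ with $a_1P_1+\cdots+a_nP_n = O$ as a generic identity on $E_\lambda$, contradicting the hypothesis of no generic relations among the $P_i$; the case of $H_2$ proper is handled symmetrically and yields a generic relation among the $Q_j$.

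The one delicate point to verify is that the constant-part term $G_0(\C)$ appearing in the conclusion of Lemma \ref{transc} genuinely vanishes here, since otherwise one would only obtain $\bo{y}$ in a $\C$-rational translate of a proper subgroup, which is not enough to derive a generic $R_i$-linear relation. The non-constancy of both $\lambda$ and $\mu$ is precisely what makes this step automatic in case (1); in cases (2) and (3), where one of the factors is a constant elliptic curve or the multiplicative group, $G_0(\C)$ is no longer trivial, which is exactly why those cases will require a separate treatment later in the paper.
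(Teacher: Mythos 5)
Your proposal is correct and follows essentially the same route as the paper: apply Lemma \ref{transc} to $G=E_\lambda^n\times E_\mu^m$, note that in case (1) the constant part $G_0$ vanishes (both $\lambda$ and $\mu$ non-constant, no toric factor), and then use Lemmas \ref{lemsubg1} and \ref{lemsubg2} to convert membership in a proper algebraic subgroup into a generic relation among the $P_i$ or the $Q_j$, contradicting the hypotheses. Your explicit remark on why $G_0(\C)$ is trivial here, and why it fails to be trivial in cases (2) and (3), is exactly the point the paper's terser argument relies on.
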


\begin{proof}
In case (1) we have $A_0=0$ and there is no toric part. Therefore, if $z_1, \dots ,z_n, w_1, \dots ,w_m$ were algebraically dependent, then $(P_1, \dots ,P_n, Q_1, \dots , Q_m)$ would lie in an algebraic subgroup of $E^n_\lambda \times E^m_\mu$. Therefore, by Lemma \ref{lemsubg1} and \ref{lemsubg2}, there would be an identical relation among the $P_i$ or the $Q_j$ contradicting the hypotheses of Theorem \ref{mainthm}.
\end{proof}

\section{O-minimality and point counting}

For the basic properties of o-minimal structures we refer to \cite{vandenDries1998} and \cite{DriesMiller}.

\begin{definition}
A \textit{structure} is a sequence $\mathcal{S}=\left( \mathcal{S}_N\right)$, $N\geq 1$, where each $\mathcal{S}_N$ is a collection of subsets of $\R^N$ such that, for each $N,M \geq 1$:
\begin{enumerate}
\item $\mathcal{S}_N$ is a boolean algebra (under the usual set-theoretic operations);
\item $\mathcal{S}_N$ contains every semialgebraic subset of $\R^N$;
\item if $A\in \mathcal{S}_N$ and $B\in \mathcal{S}_M$, then $A\times B \in \mathcal{S}_{N+M}$;
\item if $A \in \mathcal{S}_{N+M}$, then $\pi (A) \in \mathcal{S}_N$, where $\pi :\R^{N+M}\rightarrow \R^N$ is the projection onto the first $N$ coordinates.
\end{enumerate}
If $\mathcal{S}$ is a structure and, in addition,
\begin{enumerate}
\item[(5)] $\mathcal{S}_1$ consists of all finite union of open intervals and points,
\end{enumerate}
then $\mathcal{S}$ is called an \textit{o-minimal structure}.
\end{definition}
Given a structure $\mathcal{S}$, we say that $S \subseteq \R^N$ is a \textit{definable set} if $S\in \mathcal{S}_N$. 

Let $U\subseteq \R^{M+N}$. For $t_0\in \R^M$, we set $U_{t_0}=\{ x\in \R^N: (t_0,x) \in U \}$ and call $U$ a \emph{family} of subsets of $\R^N$, while $U_{t_0}$ is called the \emph{fiber} of $U$ above $t_0$. If $U$ is a definable set, then we call it a \emph{definable family} and one can see that the fibers $U_{t_0}$ are definable sets too. Let $S\subseteq \R^N$ and $f:S\rightarrow \R^M$ be a function. We call $f$ a \emph{definable function} if its graph $\lg (x,y) \in S\times \R^{M}:y=f(x) \rg$ is a definable set. It is not hard to see that images and preimages of definable sets via definable functions are still definable.

There are many examples of o-minimal structures, see \cite{DriesMiller}. In this article we are interested in the structure of \textit{globally subanalytic sets}, usually denoted by $\R_{\text{an}}$. We are not going to pause on details about this structure because it is enough for us to know that, if $D\subseteq \R^N$ is a compact definable set, $I$ is an open neighborhood of $D$ and $f:I\rightarrow \R^M$ is an analytic function, then $f(D)$ is definable in $\R_{\text{an}}$.

We now fix an o-minimal structure $\mathcal{S}$. 

\begin{proposition}[\cite{DriesMiller}, 4.4] \label{unifbound}
Let $U$ be a definable family. There exists a positive integer $\gamma$ such that each fiber of $U$ has at most $\gamma$ connected components.
\end{proposition}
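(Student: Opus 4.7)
My plan is to deduce the uniform bound from the cell decomposition theorem, which is the fundamental structural result available in any o-minimal structure. Write $U \subseteq \R^{M+N}$ and let $\pi:\R^{M+N}\to \R^M$ denote the projection onto the parameter coordinates. Recall that a cell in $\R^k$ is a definable connected set built inductively from graphs of definable continuous functions and from open ``bands'' between such graphs, starting from points and open intervals in $\R$; see \cite{vandenDries1998}, Chapter 3.

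The first step would be to invoke cell decomposition in the form compatible with a definable projection: there exists a finite partition of $\R^{M+N}$ into definable cells $C_1,\ldots,C_\gamma$, each either contained in $U$ or disjoint from $U$, with the additional property that for every $j$ the image $\pi(C_j)\subseteq \R^M$ is itself a cell, and the fibre $(C_j)_{t_0}$ over each $t_0\in \pi(C_j)$ is a cell of $\R^N$. By definition cells are connected, so in particular each such fibre is connected.

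The second step is a short counting argument. Let $J\subseteq\{1,\ldots,\gamma\}$ index those $C_j$ contained in $U$. For any fixed $t_0\in \R^M$ one then has the disjoint union
\begin{equation*}
U_{t_0} \;=\; \bigsqcup_{j\in J,\ t_0\in \pi(C_j)} (C_j)_{t_0},
\end{equation*}
whose summands are non-empty connected definable subsets of $\R^N$. Hence the number of connected components of $U_{t_0}$ is bounded above by the number of indices $j\in J$ with $t_0\in \pi(C_j)$, which is at most $\gamma$ independently of $t_0$.

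The main obstacle in this strategy is really the cell decomposition theorem itself, which is proved by a simultaneous induction on the ambient dimension intertwined with the monotonicity theorem and a piecewise continuity result for definable functions of one variable, both resting on o-minimality of the structure. Granting this standard tool from o-minimality, the uniform bound $\gamma$ falls out in a few lines as above.
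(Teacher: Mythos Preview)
Your argument via cell decomposition is correct and is the standard proof of this uniform finiteness statement in o-minimality. However, there is nothing to compare against: the paper does not give its own proof of this proposition. It is stated with a reference to \cite{DriesMiller}, 4.4, and used as a black box. So your proposal supplies a proof where the paper deliberately omits one, and the route you take (invoking the cell decomposition theorem compatible with the projection $\R^{M+N}\to\R^M$, then bounding the number of connected components of a fibre by the number of cells meeting that fibre) is exactly the argument underlying the cited result.
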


We are going to use a result from \cite{HabPila14}. For this we need to define the height of a rational point. The height used in \cite{HabPila14} is not the usual projective Weil height, but a coordinatewise affine height. If $a/b$ is a rational number written in lowest terms, then $H(a/b)= \max \{|a|,|b|\}$ and, for an $N$-tuple $(\alpha_1, \dots , \alpha_N) \in \Q^N$, we set $H(\alpha_1, \dots , \alpha_N)= \max_i H(\alpha_i)$.
For a family $Z \subseteq \R^{M_1+M_2+N}$, a positive real number $T$ and $t \in \R^{M_1}$ we define
\begin{equation}\label{def}
Z_t^{\sim}(\Q,T)=\lg (y,z)  \in  Z_t: y\in \Q^{M_2}, H(y) \leq T \rg.
\end{equation}

By $\pi_1$ and $\pi_2$ we indicate the projections of $Z_t$ to the first $M_2$ and the last $N$ coordinates respectively.

\begin{proposition}[\cite{HabPila14}, Corollary 7.2]\label{HabPila}
Let $Z\subseteq \R^{M_1+M_2+N}$ be a definable family.
For every $\epsilon>0$ there exists a constant $c=c(Z,\epsilon)$ with the following property. Fix $t \in \R^{M_1}$ and $T\geq 1 $. If $|\pi_2(\Sigma)|> c T^\epsilon$ for some $\Sigma \subseteq Z_t^{\sim}(\Q,T)$, then there exists a continuous definable function $\delta:[0,1] \rightarrow Z_t$ such that
\begin{enumerate}
\item the composition $\pi_1 \circ \delta : [0,1] \rightarrow \R^{M_2}$ is semi-algebraic and its restriction to $(0,1)$ is real analytic;
\item the composition $\pi_2 \circ \delta : [0,1] \rightarrow \R^N$ is non-constant;
\item we have $\pi_2(\delta(0)) \in \pi_2 (\Sigma) $.
\end{enumerate}
\end{proposition}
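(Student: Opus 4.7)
The statement to prove is a version of Habegger--Pila's Corollary 7.2, so the plan is essentially to derive it from (a definable-family form of) the Pila--Wilkie counting theorem combined with definable choice. The plan is to work with the family $Z \subseteq \R^{M_1+M_2+N}$ by regarding the $t$-parameter as fixed; the key is that everything must be arranged uniformly in $t$ so that the constant $c$ depends only on the family $Z$ and $\epsilon$, not on $t$.

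First I would reduce to a statement purely about the projected family. Set $W_t = \pi_1(Z_t) \subseteq \R^{M_2}$; this is a definable family in $t$. From the hypothesis $|\pi_2(\Sigma)| > cT^\epsilon$ I extract, fiberwise, a large set of rational points $\Sigma' \subseteq W_t(\Q,T)$, noting that $\pi_1$ restricted to $\Sigma$ need not be injective but $|\Sigma'|$ is at least $|\pi_2(\Sigma)|$ after choosing one preimage per rational point. The Pila--Wilkie theorem in its definable-family version (the form due to Pila for blocks, as refined in \cite{Pila11}) then says that, for $c$ sufficiently large in terms of $Z$ and $\epsilon$, a bound $|W_t(\Q,T)| > cT^\epsilon$ forces a positive proportion of these points to lie on a connected semi-algebraic subset $B \subseteq W_t$ of positive dimension whose parameters of description can be taken uniform in $t$ (i.e.\ $B$ is the fiber of a definable family of semi-algebraic sets). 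From $B$ I would pick a real analytic semi-algebraic arc $\gamma : [0,1] \to W_t$ with $\gamma(0)$ equal to a chosen point $y_0 \in \pi_1(\Sigma')$ and with $\gamma$ non-constant; standard semi-algebraic arc-selection justifies the analyticity on $(0,1)$ and the continuity at $0$.

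Next I would lift $\gamma$ to a definable continuous $\delta : [0,1] \to Z_t$ with $\pi_1 \circ \delta = \gamma$. Here I would invoke definable choice (curve selection) for the fibers of $\pi_1|_{Z_t}$: the set
\begin{equation*}
\lg (s, z) \in [0,1] \times Z_t : \pi_1(z) = \gamma(s) \rg
\end{equation*}
is definable, its fiber over $s=0$ contains a preimage $z_0$ of $y_0$ with $\pi_2(z_0) \in \pi_2(\Sigma)$ (chosen so that $(\gamma(0), \pi_2(z_0)) \in \pi_1(\Sigma)\times \pi_2(\Sigma)$ corresponds to an element of $\Sigma$), and definable choice produces a continuous section $\delta$ with $\delta(0)=z_0$. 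This gives (1) and (3) of the conclusion.

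The hard part, and the only genuinely subtle point, is ensuring (2): that $\pi_2 \circ \delta$ is non-constant. The strategy here would be the one that forces the refinement of the plain Pila--Wilkie theorem in \cite{HabPila14}: one runs the block argument not on $W_t$ alone but on a \emph{thickening} that records the $\pi_2$-coordinate only as far as its combinatorial type. Concretely, one partitions $Z_t$ by definable cell decomposition so that on each cell $\pi_2$ is either constant along the fibers of $\pi_1$ or varies analytically in a controlled way. If along every lift $\delta$ one obtained $\pi_2 \circ \delta$ constant, then all of the $cT^\epsilon$-many rational points in $\Sigma'$ would pull back to a single $\pi_2$-value, contradicting $|\pi_2(\Sigma)| > cT^\epsilon$ once $c$ is enlarged to absorb the (uniformly bounded, by Proposition~\ref{unifbound}) number of cells. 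Thus one can arrange to choose $\delta$ landing in a cell where $\pi_2$ varies, yielding condition (2). The two estimates--the uniform cell bound from o-minimality and the uniform block estimate from Pila--Wilkie--then combine to finish the proof, which is exactly the content of Corollary 7.2 of \cite{HabPila14}.
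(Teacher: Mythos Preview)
The paper does not prove this proposition at all: it is quoted verbatim as Corollary~7.2 of \cite{HabPila14} and used as a black box. There is therefore no ``paper's own proof'' to compare your proposal against; the authors simply cite the result and apply it in Section~4.

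As for your sketch on its own merits, the overall architecture (family Pila--Wilkie to find a semi-algebraic arc in the $y$-coordinates, then definable choice to lift, then a cell-decomposition argument to force $\pi_2\circ\delta$ non-constant) is indeed the shape of the Habegger--Pila argument. However, there is a genuine slip in your first reduction: from $|\pi_2(\Sigma)|>cT^\epsilon$ you cannot conclude that $|\pi_1(\Sigma)|$ is large by ``choosing one preimage per rational point,'' since many distinct $\pi_2$-values could sit over the same rational $y$. The actual argument in \cite{HabPila14} does not pass through $W_t=\pi_1(Z_t)$ in this na\"ive way; rather, one works with the full set $Z_t$ and counts rational $y$-fibres weighted by how the $\pi_2$-image varies, which is precisely why the refined block/cell bookkeeping you allude to at the end is needed from the start and not just as an afterthought for condition~(2). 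So your plan has the right ingredients but the order of operations, and in particular the passage from largeness of $\pi_2(\Sigma)$ to the Pila--Wilkie input, needs to be reworked if you intend to actually reprove the corollary rather than cite it.
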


\section{The main estimate}

Fix a $\bo{c} \in \widehat{\cC}$ and a neighborhood $N_{\bo{c}}$ of $\bo{c}$ on $\widehat{\cC}$. Moreover, fix a closed disc $D_{\bo{c}}$ inside $\pi(N_{\bo{c}})$, centered in $\pi(\bo{c})$ and analytically isomorphic to a closed disc. In Section \ref{periods} we defined the analytic functions $f,g,h,k,z_1, \dots ,z_n,w_1, \dots ,w_m$ on $D_{\bo{c}}$ as a basis for the local system of periods of $E_\lambda$ and $E_\mu$ (or $\Gm$) and elliptic logarithms of the $P_i$ and $Q_j$ (or logarithms of the $u_j$).

For the rest of this section we suppress the dependence on $\bo{c}$ in the notation, since it is fixed. We use Vinogradov's $\ll$ notation. The implied constant is always going to depend on $D$.

In cases (1) and (2), we define, for $\bo{a} \in \Z^n\setminus \{0\}$ and $\bo{b} \in R_2^m\setminus \{0\}$,
\begin{equation}\label{defD}
D(\bo{a},\bo{b})=\lg t \in D: \sum a_i z_i(t) \in \Z f(t)+\Z g(t) \text{ and } \sum b_j w_j(t) \in \Z h(t)+\Z k(t) \rg .
\end{equation}

In case (3), for $\bo{a} \in \Z^n\setminus \{0\}$ and $\bo{b} \in \Z^m\setminus \{0\}$, we set
\begin{equation*}\label{defDm}
D(\bo{a},\bo{b})=\lg t \in D: \sum a_i z_i(t) \in \Z f(t)+\Z g(t) \text{ and } \sum b_j w_j(t) \in  2\pi i \Z\rg .
\end{equation*}

For a vector of integers $\bo{a}$, we indicate by $|\bo{a}|$ its max norm $\max \{ |a_1|, \dots , |a_n| \}$. In case (2), if $E_{\mu_0}$ has CM, we have $R_2= \Z+ \rho \Z$, for some quadratic integer $\rho$. For $\bo{b}=(b_1,\dots , b_m) \in R_2^m$,
we set $|\bo{b}|=\max \{ |N(b_1)|, \dots , |N(b_m)|  \}$, where $N(b_j )$ is the norm of $b_j$.

\begin{proposition}\label{estim}
Under the hypotheses of Theorem \ref{mainthm} and Theorem \ref{mainthm2}, for every $\epsilon>0$ we have $|D(\bo{a},\bo{b})|\ll_\epsilon (\max \{ |\bo{a}|,|\bo{b}|\})^\epsilon$, for every non-zero $\bo{a}, \bo{b}$.
\end{proposition}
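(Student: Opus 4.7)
The plan is to follow the Pila--Zannier template: encode the set $D(\bo{a}, \bo{b})$ as rational points on a definable subanalytic set, apply the Habegger--Pila counting result (Proposition \ref{HabPila}) to extract structure from any excess of solutions, and derive a contradiction from the algebraic independence of the logarithms $z_i, w_j$ over the field of periods --- as supplied by Bertrand's theorem (Lemma \ref{transc}; Corollary \ref{cortr} in case (1)).

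\textbf{Setup.} First I would expand the logarithms in real coordinates relative to the period bases:
$$z_i(t) = p_i(t) f(t) + q_i(t) g(t), \qquad w_j(t) = r_j(t) h(t) + s_j(t) k(t),$$
with $p_i, q_i, r_j, s_j$ real-analytic on a neighborhood of the compact disc $D$. In case (3), since $\Pi_{\Gm}$ has rank $1$, the $w$-expansion collapses to $w_j = 2\pi i \, s_j$; in case (2), the $R_2$-action is accommodated by fixing a $\Z$-basis of $R_2$ and regarding each $b_j$ through its two real coordinates. Then $t \in D(\bo{a}, \bo{b})$ if and only if there exist integers $n_1, n_2, m_1, m_2$ with
$$\sum_i a_i p_i(t) = n_1, \quad \sum_i a_i q_i(t) = n_2, \quad \sum_j b_j r_j(t) = m_1, \quad \sum_j b_j s_j(t) = m_2.$$
Compactness of $D$ forces $|n_\ell|, |m_\ell| \ll T := \max\{|\bo{a}|, |\bo{b}|\}$.

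\textbf{Counting.} Next I would introduce the definable (in $\R_{\mathrm{an}}$) set $Z$ of tuples $(\alpha, \beta, \nu_\ell, \mu_\ell, t)$ with $t \in D$ satisfying the above four identities when $a_i, b_j, n_\ell, m_\ell$ are replaced by real variables $\alpha_i, \beta_j, \nu_\ell, \mu_\ell$. For fixed integer $\bo{a}, \bo{b}$, the corresponding integer tuples form a subset $\Sigma \subseteq Z^{\sim}(\Q, CT)$ with $|\pi_2(\Sigma)| = |D(\bo{a}, \bo{b})|$ for the projection $\pi_2$ onto the $t$-coordinate. Assuming toward a contradiction that $|D(\bo{a}, \bo{b})| > c T^\epsilon$ with $c = c(Z, \epsilon)$ as in Proposition \ref{HabPila}, the proposition would produce a continuous definable arc $\delta : [0,1] \to Z$ whose projection $\pi_1 \circ \delta$ to the rational coordinates is semi-algebraic (and real analytic on $(0,1)$), and whose projection $t(s) = \pi_2 \circ \delta(s)$ is non-constant.

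\textbf{Contradiction and main obstacle.} Along the arc one would obtain the identity
$$\sum_i \tilde\alpha_i(s)\, z_i(t(s)) = \tilde\nu_1(s)\, f(t(s)) + \tilde\nu_2(s)\, g(t(s)),$$
together with its $\bo{b}$-analog. If the semi-algebraic functions $\tilde\alpha, \tilde\beta, \tilde\nu, \tilde\mu$ are all constant, then --- because $t(s)$ is non-constant --- the identity propagates by the identity theorem and analytic continuation to a global relation on $\widehat\cC$, giving $\sum_i a_i P_i \equiv O$ (or $\sum_j b_j Q_j \equiv O$) and contradicting generic independence. If they are not all constant, a standard Wronskian/differentiation argument in the semi-algebraic parameter $s$, combined with the non-constancy of $t(s)$, would extract a non-trivial polynomial relation among $z_1, \dots, z_n, w_1, \dots, w_m$ over $\C(f, g, h, k)$, contradicting Corollary \ref{cortr} in case (1). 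The hard part will be cases (2) and (3): the constant part $G_0$ of $G$ is non-trivial ($E_{\mu_0}^m$ or $\Gm^m$), and Bertrand's theorem permits proper $H \subset G$ with $H + G_0 = G$, so the analog of Corollary \ref{cortr} is not automatic. My plan there, following the outline in the introduction, is to reduce to $m = 1$ via a $\Z$-linear change of the $\bo{b}$-coordinates and then handle the resulting sub-cases (constant versus non-constant $u_1$, respectively $Q_1$) separately --- the constant sub-case being essentially trivial, the non-constant sub-case requiring a refinement of the Bertrand-type argument tailored to the reduced situation.
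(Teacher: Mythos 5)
Your treatment of case (1) is essentially the paper's proof: parametrize $D(\bo{a},\bo{b})$ by the real coordinates $p_i,q_i,r_j,s_j$, note that compactness of $D$ bounds the auxiliary integers by $\ll\max\{|\bo{a}|,|\bo{b}|\}$, apply Proposition \ref{HabPila} to a definable family, and turn the semi-algebraic arc into a drop in $\text{tr.deg}_F F(z_1,\dots,z_n,w_1,\dots,w_m)$ contradicting Corollary \ref{cortr}. (The paper does this by a plain count of independent algebraic relations over $F=\C(f,g,h,k)$ rather than a Wronskian, but that is the same mechanism.)

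For cases (2) and (3), however, your plan has a genuine gap. You correctly observe that Corollary \ref{cortr} fails there, and your instinct to ``reduce to $m=1$'' is the right one, but the sentence ``the constant sub-case being essentially trivial, the non-constant sub-case requiring a refinement of the Bertrand-type argument'' does not contain the argument. What is actually needed is the following. For each $\bo{b}$ one takes the codimension-$1$ subgroup $Z=\ker$ of the relation, the one-dimensional quotient $X=E_{\mu_0}^m/Z$ (resp.\ $\Gm^m/Z$) with quotient map $\phi$, and replaces $w_1,\dots,w_m$ by the single function $w'=d\phi(w_1,\dots,w_m)$. Three points then have to be handled. First, uniformity in $\bo{b}$: the vector $\bo{l}$ representing $d\phi$ must be built into the definable set as a \emph{family parameter} so that the constant from Proposition \ref{HabPila} is independent of $\bo{b}$. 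Second, a height bound on the auxiliary rational data: membership in $R+Z$ with $R$ torsion must be witnessed by rationals $d_j,e_j$ of height $\ll|\bo{b}|$; this is not automatic from compactness of $D$ and requires bounding the order of $R$ via Lemma \ref{lemsubg3} and an explicit lattice computation in $\Pi_{E_{\mu_0}^m}+Lie(Z)$ (the paper's Lemma \ref{lemest3}). Third, the contradiction itself: applying Lemma \ref{transc} to $G=E_\lambda^n\times X$ only yields $(P_1,\dots,P_n,\phi(Q_1,\dots,Q_m))\in H+G_0(\C)$ with $G_0=\{O\}\times X$, i.e.\ that $\phi(Q_1,\dots,Q_m)$ is \emph{constant} on $D$ --- this is exactly the non-torsion-translate escape you worry about, and it is not trivial to dispose of. It is killed by condition (3) of Proposition \ref{HabPila}: the arc starts at a point $\delta(0)$ lying over some $t_0\in\pi_2(\Sigma)$ where $\phi(Q_1(t_0),\dots,Q_m(t_0))=\phi(R)$ with $R$ torsion, so the constant value is $\phi(R)$ and hence $(Q_1,\dots,Q_m)$ lies identically in $R+Z\subseteq\ker(\bo{b})$, contradicting generic independence of the $Q_j$. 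Without the basepoint condition and the height-bounded torsion translate, your reduced ``non-constant sub-case'' cannot be closed.
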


We are going to prove this proposition in cases (1), (2) and (3) separately. Let us first collect a few definitions and facts needed for all three of them.

Define
$$
\Delta=f \overline{ g }-\overline{f} g,
$$
which does not vanish on $D$, since $f(t)$ and $g(t)$ are $\R$-linearly independent for every $t \in D$. Moreover, let
\begin{align*}
p_i=\frac{z_i \overline{g}-\overline{z_i} g}{\Delta},   \   \   \   q_i=-\frac{z_i \overline{ f}-\overline{z_i} f}{\Delta}.
\end{align*}
One can easily check that these are real-valued and, furthermore, we have
$$
z_i= p_i f+ q_i g.
$$
If we view $D$ as a subset of $\R^2$, then $p_i$ and $q_i$ are real analytic functions on a neighborhood of $D$.

Analogously, in cases (1) and (2), we can define the real valued functions $r_j$, $s_j$ with
$$
w_j= r_j h+ s_j k.
$$
In case (3) we set
$$
w_j=r_j+2 \pi i s_j, 
$$
where again $r_j$ and $s_j$ are real valued.

In all cases we define
\begin{align*}
\Theta : D& \rightarrow \R^{2n+2m}\\
 t &\mapsto (p_{1}(t),q_{1}(t),\dots ,p_{n}(t),q_{n}(t), r_1(t), s_1(t), \dots, r_m(t), s_m(t) ),
\end{align*}
and set $S=\Theta(D)$.

Since $\Theta$ is analytic and $D$ is a closed disc we have that $S$ is definable in $\R_{\text{an}}$.

\begin{lemma}\label{lemfin}
Under the hypotheses of Theorem \ref{mainthm} and Theorem \ref{mainthm2}, there exists a constant $\gamma$ (depending only on $D$) such that, for every choice of integers $a_1, \dots, a_{n+2}$, not all zero, the number of $t$ in $D$ with
\begin{equation}\label{1rel}
a_{1}z_{1}(t)+\dots + a_{n}z_{n}(t)=a_{n+1}f(t)+ a_{n+2} g(t).
\end{equation}
is at most $\gamma$.
\end{lemma}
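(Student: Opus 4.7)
The plan is to reduce the lemma to two things: $(a)$ for every non-zero integer tuple $\bo{a} = (a_1, \ldots, a_{n+2})$ the analytic function
$$
\phi_{\bo{a}}(t) := a_1 z_1(t) + \cdots + a_n z_n(t) - a_{n+1} f(t) - a_{n+2} g(t)
$$
does not vanish identically on $D$; and $(b)$ an o-minimal uniform bound on the number of solutions as $\bo{a}$ varies. Pointwise finiteness is essentially free from $(a)$: since $D$ is analytically a closed disc in $\C$ and $z_i, f, g$ extend holomorphically to a neighborhood, $\phi_{\bo{a}}$ is a holomorphic function of the local complex coordinate, and its zero set is either all of $D$ or discrete, hence finite by compactness.

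For $(a)$, the argument splits according to whether $a_1,\ldots,a_n$ all vanish. If $\phi_{\bo{a}} \equiv 0$ and $a_1 = \cdots = a_n = 0$, then $a_{n+1} f + a_{n+2} g \equiv 0$; but $\{f(t), g(t)\}$ is a basis of the period lattice at each $t \in D$, so $f$ and $g$ are $\R$-linearly independent on $D$, forcing $a_{n+1} = a_{n+2} = 0$, contrary to $\bo{a} \neq 0$. Otherwise some $a_i \neq 0$ with $i \leq n$; applying $\exp_{E_\lambda}$ to $\sum_i a_i z_i = a_{n+1} f + a_{n+2} g$ and using that $a_{n+1} f + a_{n+2} g$ exponentiates to $O$, one obtains $a_1 P_1 + \cdots + a_n P_n = O$ on $N_{\bo{c}^*}$, and hence identically on $\cC$ by irreducibility and analytic continuation, contradicting the hypothesis that $P_1, \ldots, P_n$ are generically independent on $E_\lambda$.

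For $(b)$, writing $z_i = p_i f + q_i g$ with real analytic $p_i, q_i$ and exploiting the $\R$-linear independence of $f(t), g(t)$, the condition $\phi_{\bo{a}}(t) = 0$ becomes the pair of real analytic equations $\sum_i a_i p_i(t) = a_{n+1}$ and $\sum_i a_i q_i(t) = a_{n+2}$, so that
$$
Z := \lg (\bo{a}, t) \in \R^{n+2} \times D : \textstyle\sum_i a_i p_i(t) = a_{n+1}, \ \sum_i a_i q_i(t) = a_{n+2} \rg
$$
is a definable family in $\R_{\text{an}}$. Proposition \ref{unifbound} then yields a constant $\gamma$, depending only on $D$, bounding the number of connected components of every fiber $Z_{\bo{a}}$. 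For $\bo{a} \neq \bo{0}$, finiteness of the fiber, established via $(a)$ plus holomorphicity, lets us identify connected components with points and conclude that the cardinality is at most $\gamma$.

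I expect the only genuine obstacle to be step $(a)$: one has to use the $\R$-linear independence of $f, g$ in the simple case and, in the substantive case, pass from a putative identical relation on the local disc $D$ to an identical relation on all of $\cC$ in order to invoke the generic independence hypothesis on the $P_i$. The remainder, including the transition from a fibrewise holomorphic vanishing statement to a uniform-in-$\bo{a}$ bound, is a standard application of definable uniformity in $\R_{\text{an}}$.
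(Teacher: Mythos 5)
Your proposal is correct and follows essentially the same route as the paper: show that for fixed non-zero $\bo{a}$ the relation cannot hold on an infinite subset of $D$ without holding identically (which, after exponentiating, would contradict the generic independence of the $P_i$, the degenerate case $a_1=\dots=a_n=0$ being excluded by the $\R$-linear independence of $f$ and $g$), and then obtain the uniform constant $\gamma$ from the definability of the relevant family in $\R_{\text{an}}$ via Proposition \ref{unifbound}. Your write-up is somewhat more explicit than the paper's (which cites the identity theorem and the definability of $\Theta$ without spelling out the case split), but there is no substantive difference.
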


\begin{proof}

First, suppose there is an infinite set $E\subseteq D$ on which, for every $t \in E$, \eqref{1rel} holds for some fixed $a_1, \dots, a_{n+2}$, not all zero. Since this is a set with an accumulation point, the same relation must hold on the whole $D$ (see Ch.~ III, Theorem 1.2 (ii) of \cite{LangCompl}), contradicting the hypotheses of Theorems \ref{mainthm} and \ref{mainthm2}.

The existence of a uniform bound $\gamma$ follows from Proposition \ref{unifbound} and the fact that $\Theta$ is a definable function.
\end{proof}

In what follows, $(p_{1},q_{1},\dots , r_m, s_m )$ will indicate coordinates in $ \R^{2n+2m}$. 

We now consider the three cases separately.

\subsection{Case (1)}

We start considering case (1), i.e., our curve lies in $E_\lambda^n \times E_\mu^m$ and $\lambda$ and $\mu$ are both not constant.

For $T>0$, we call $S^{(1)}(\bo{a},\bo{b},T)$ the set of points of $S$ of coordinates $(p_{1},q_{1},\dots , r_m, s_m )$ such that there exist $a_{n+1},a_{n+2},b_{m+1},b_{m+2} \in \Z \cap [-T,T ]$ with
\begin{equation}\label{system}
\lg
\begin{array}{c}
a_{1}p_{1}+\dots + a_{n} p_{n}=a_{n+1},\\
a_{1}q_{1}+\dots + a_{n} q_{n}=a_{n+2},\\
b_{1}r_{1}+\dots + b_{m} r_{m}=b_{m+1},\\
b_{1}s_{1}+\dots + b_{m}s_{m}=b_{m+2}.
\end{array}
\right.
\end{equation} 

\begin{lemma}\label{lemest}
Under the hypotheses of Theorem \ref{mainthm}, for every $\epsilon>0$ we have $$|S^{(1)}(\bo{a},\bo{b},T)|\ll_\epsilon (\max \{|\bo{a}|,| \bo{b}|,  T\})^\epsilon,$$
for all non-zero $\bo{a}$ and $\bo{b}$ and all $T\geq 1$.
\end{lemma}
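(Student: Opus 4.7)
The plan is to apply the Pila-style counting of Proposition \ref{HabPila} to a suitably chosen definable family, and then to combine the resulting definable arc with the identity principle of Lemma \ref{lemfin} and the algebraic independence of Corollary \ref{cortr} to reach a contradiction whenever the count is too large.

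First I would introduce the definable family
\[
Z=\bigl\{(\bo{\alpha},\bo{\beta},\bo{x})\in [-1,1]^{n+m+4}\times S : (p_i,q_i,r_j,s_j)=\bo{x}\text{ and the scaled version of \eqref{system} holds}\bigr\},
\]
which is definable in $\R_{\text{an}}$ because $S$ is. Set $M=\max\{|\bo{a}|,|\bo{b}|,T\}$. Each point of $S^{(1)}(\bo{a},\bo{b},T)$ determines unique integers $a_{n+1},a_{n+2},b_{m+1},b_{m+2}$ of absolute value at most $T$, so scaling the full coefficient tuple by $1/M$ yields a point of $Z$ whose first $n+m+4$ coordinates are rational of height at most $M$. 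Projection onto the last $2n+2m$ coordinates is injective on the resulting set $\Sigma\subseteq Z^{\sim}(\Q,M)$ and satisfies $|\pi_2(\Sigma)|=|S^{(1)}(\bo{a},\bo{b},T)|$.

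Suppose for contradiction that $|S^{(1)}(\bo{a},\bo{b},T)|>cM^\epsilon$ for the constant $c=c(Z,\epsilon)$ from Proposition \ref{HabPila} (applied with $M_1=0$, $M_2=n+m+4$, $N=2n+2m$). The proposition then yields a continuous definable $\delta:[0,1]\to Z$ with $\pi_1\circ\delta$ semi-algebraic and real-analytic on $(0,1)$, $\pi_2\circ\delta$ non-constant, and $\pi_2(\delta(0))\in\pi_2(\Sigma)$. Writing $\delta(s)=(\bo{\alpha}(s),\bo{\beta}(s),\bo{x}(s))$ and $\bo{x}(s)=\Theta(t(s))$, the real-analytic extension of $\Theta$ to a neighborhood of $D$ together with the non-constancy of $\pi_2\circ\delta$ lets us choose a subinterval on which $t(s)$ is analytic and non-constant; along it the identities
\begin{align*}
\sum_{i=1}^n \alpha_i(s) z_i(t(s)) &= \alpha_{n+1}(s) f(t(s))+\alpha_{n+2}(s) g(t(s)),\\
\sum_{j=1}^m \beta_j(s) w_j(t(s)) &= \beta_{m+1}(s) h(t(s))+\beta_{m+2}(s) k(t(s))
\end{align*}
hold. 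Now I case-split. If $\pi_1\circ\delta$ is constant, the coefficients $(\bo{\alpha},\bo{\beta})(s)$ are frozen at the scaled tuple of $\pi_2(\delta(0))$; clearing $M$ turns the first identity into $\sum a_i z_i(t(s))=a_{n+1}f(t(s))+a_{n+2}g(t(s))$ with fixed integer coefficients, valid on the non-constant analytic arc $t(s)$. The identity principle extends it to $D$, and applying $\exp_{E_\lambda}$ yields the generic relation $\sum a_iP_i=O$ on $\cC$, contradicting the hypotheses of Theorem \ref{mainthm} because $\bo{a}\neq 0$. If instead $\pi_1\circ\delta$ is non-constant, the Zariski closure of $\{(\bo{\alpha}(s),\bo{\beta}(s))\}$ is a positive-dimensional real algebraic subvariety of $\R^{n+m+4}$, and I would eliminate $\bo{\alpha},\bo{\beta}$ from the bilinear identities using their algebraic dependencies. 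The resulting non-trivial polynomial identity among the analytic functions $z_i,w_j,f,g,h,k$ on $D$ contradicts Corollary \ref{cortr}.

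The principal obstacle is this non-constant case: one must verify that the elimination really produces a relation non-trivial enough to contradict algebraic independence over $\C(f,g,h,k)$, rather than degenerating. The degenerate scenarios, in which $(\bo{\alpha}(s),\bo{\beta}(s))$ moves inside a subspace already annihilated by the analytic data, are to be excluded using the $\R$-linear independence of $f,g$ and of $h,k$ together with the hypothesis that no generic relation holds among the $P_i$ or among the $Q_j$; this reduces the sub-cases to the previous one of a fixed-coefficient identity extending to all of $D$.
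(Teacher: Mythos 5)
Your setup coincides with the paper's: you form a definable family of (coefficient, point-of-$S$) tuples cut out by the system \eqref{system} with real coefficients, feed it to Proposition \ref{HabPila} with $T'=\max\{|\bo{a}|,|\bo{b}|,T\}$, and try to turn the resulting definable arc into a contradiction. The rescaling into $[-1,1]$ and the choice of $\Sigma$ as only the genuine integer tuples are harmless variants (the latter is in fact tidier, since it prevents $\pi_2(\Sigma)$ from swallowing all of $S$ via the zero coefficient tuple). But your constant case already has a flaw: Proposition \ref{HabPila} controls $\pi_2(\delta(0))$, not $\pi_1(\delta(0))$, so the frozen coefficient vector is merely \emph{some} real tuple satisfying the system at that point, not the scaled integer tuple, and you cannot "clear $M$" to recover $\sum a_iP_i=O$. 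The repair is to note that a fixed non-zero real tuple yields a non-trivial algebraic relation among $z_1,\dots,z_n,f,g$ (or among $w_1,\dots,w_m,h,k$) on an infinite subset of $D$, hence on all of $D$, contradicting Corollary \ref{cortr} directly -- but then you must also exclude the frozen tuple being zero, which your $Z$ as defined does not do.

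The more serious problem is that the non-constant case, which is the decisive step of the lemma, is left as an acknowledged "principal obstacle." The paper closes it not by an elimination whose non-degeneracy is checked case by case, but by a single transcendence-degree count: since $\delta_1(E)$ is contained in a real algebraic curve, the $n+m+4$ coefficient functions restricted to a connected infinite $D'\subseteq D$ generate a field of transcendence degree at most $1$ over $\C$, i.e.\ satisfy $n+m+3$ independent algebraic relations; the two identities $\sum\alpha_iz_i=\alpha_{n+1}f+\alpha_{n+2}g$ and $\sum\beta_jw_j=\beta_{m+1}h+\beta_{m+2}k$ contribute two further independent relations, so the $2n+2m+4$ functions $\alpha_1,\dots,\beta_{m+2},z_1,\dots,w_m$ satisfy $n+m+5$ independent relations over $F=\C(f,g,h,k)$, forcing $\textsl{tr.deg}_F F(z_1,\dots,z_n,w_1,\dots,w_m)\le n+m-1$ and contradicting Corollary \ref{cortr}. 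Your proposed elimination would have to reproduce exactly this bookkeeping, and your assertion that the degenerate sub-cases "reduce to the fixed-coefficient case" is not an argument: the degeneration to guard against is the two bilinear identities becoming consequences of the algebraic relations already satisfied by $(\bo{\alpha},\bo{\beta})$ alone (e.g.\ $\alpha_1=\dots=\alpha_n=0$ along the arc, in which case the $\R$-linear independence of $f,g$ forces the whole $\alpha$-block to vanish and no constraint on the $z_i$ survives), and this must be excluded at the level of the definable set, not deduced from the independence of the $P_i$. As written, the proof is incomplete exactly where the work lies.
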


\begin{proof}
Set $T'=\max \{ |\bo{a}|,|\bo{b}|,T \}$ and fix $\epsilon>0$.

Define $W$ to be the set of $\left(\alpha_1, \dots ,\alpha_{n+2},\beta_1, \dots , \beta_{m+2}, p_1, \dots , s_m  \right)  \in  \R^{n+2+m+2}\times S$ such that
\begin{equation}\label{sys2}
\lg
\begin{array}{c}
\alpha_{1}p_{1}+\dots + \alpha_{n} p_{n}=\alpha_{n+1},\\
\alpha_{1}q_{1}+\dots + \alpha_{n} q_{n}=\alpha_{n+2},\\
\beta_{1}r_{1}+\dots + \beta_{m} r_{m}=\beta_{m+1},\\
\beta_{1}s_{1}+\dots + \beta_{m}s_{m}=\beta_{m+2}.
\end{array}
\right.
\end{equation}
This is a definable set in $\R_{an}$. Recall the notation introduced in \eqref{def}. The set $W^{\sim}(\Q,T')$ consists of those tuples $\left(\alpha_1, \dots ,\alpha_{n+2} , \beta_1,  \dots , \beta_{m+2}, p_1, \dots , s_m  \right)  \in  \R^{n+2+m+2}\times S$ with rational $\alpha_1, \dots ,\alpha_{n+2} , \beta_1, \dots , \beta_{m+2}$ of height at most $T'$.
We set $\Sigma=W^{\sim}(\Q,T')$ and note that $\pi_2(\Sigma)\supseteq S^{(1)}(\bo{a},\bo{b},T)$, where $\pi_2:W\rightarrow S$ is the projection to $S$.
Then, $|S^{(1)}(\bo{a},\bo{b},T)| \leq |\pi_2(\Sigma)|$. We claim that $|\pi_2 (\Sigma)|\ll_\epsilon (T')^\epsilon$. Suppose not. Then, by Proposition \ref{HabPila}, there exists a continuous definable $\delta:[0,1] \rightarrow W$ such that $\delta_1:=\pi_1 \circ \delta : [0,1] \rightarrow \R^{n+2+m+2}$ is semi-algebraic and $\delta_2:=\pi_2 \circ \delta : [0,1] \rightarrow S$ is non-constant. Therefore, there is a connected infinite subset $E\subseteq [0,1]$ such that $\delta_1(E)$ is contained in a real algebraic curve and $\delta_2(E)$ has positive dimension. Then, there exists a connected infinite $D'\subseteq D$ with $ \Theta(D') \subseteq \delta_2(E)$.

The coordinate functions $\alpha_{1}, \dots , \alpha_{n+2},\beta_1, \dots ,\beta_{m+2}$ on $D'$ satisfy $n+m+3$ independent algebraic relations with coefficients in $\C$. Moreover, we have the relations given by \eqref{sys2},
which translate to 
\begin{equation*}
\lg
\begin{array}{c}
\alpha_{1}z_{1}+\dots + \alpha_{n} z_{n}=\alpha_{n+1}f +\alpha_{n+2} g, \\
\beta_{1}w_1+\dots + \beta_{m} w_{m}=\beta_{m+1} h+\beta_{m+2} k,
\end{array}
\right.
\end{equation*}
adding 2 algebraic relations among the $\alpha_{1}, \dots , \alpha_{n+2},\beta_1, \dots ,\beta_{m+2}$, the $z_i$, the $w_j$, $f$, $g$, $h$ and $k$.

Thus, on $D'$, and therefore by continuation on the whole $D$, the $n+2+m+2+n+m$ functions $\alpha_{1}, \dots , \alpha_{n+2},\beta_1, \dots ,\beta_{m+2},z_{1}, \dots , z_{n},w_1, \dots ,w_{m}$ satisfy $n+m+3+2$ independent algebraic relations over $F=\C(f,g,h,k)$. Thus,
$$
\textsl{tr.deg}_F F ({z}_1, \dots ,{z}_n,w_1, \dots ,w_m )\leq n+m-1.
$$
This contradicts Corollary \ref{cortr}, and proves the claim and the lemma.
\end{proof}

If $t\in D(\bo{a},\bo{b})$, then $\Theta(t)$ satisfies \eqref{system} for some integers $a_{n+1},a_{n+2},b_{m+1},b_{m+2}$. Now, since $D$ is compact, we have that the sets $z_i(D), w_j(D), f(D),$ $ g(D) ,h(D), k(D)$ are bounded and therefore we can choose  $a_{n+1},a_{n+2},b_{m+1},b_{m+2}$ bounded solely in terms of $|\bo{a}|$ and $|\bo{b}|$. Therefore, we have $\Theta (t) \in S^{(1)}(\bo{a},\bo{b},T_0)$, with $T_0\ll \max \{ |\bo{a}|,|\bo{b}|\}$. Now, by Lemma \ref{lemfin} we have $|D(\bo{a},\bo{b})|\ll |S^{(1)}(\bo{a},\bo{b}, T_0)|$ and the claim of Proposition \ref{estim} follows from Lemma \ref{lemest}. 

\subsection{Case (2)} Case (2) deals with a curve $\cC$ inside $E_\lambda^n \times E_{\mu_0}^m$ with $\lambda$ not constant and $\mu_0 \in \Qbar$.

For all $\bo{b} \in R_2^m\setminus \{ 0\} $ there is a codimension 1 abelian subvariety $Z$ of $E_{\mu_0}^m$, depending only on $\bo{b}$, such that, if a point $(Q_1, \dots, Q_m)\in E_{\mu_0}^m$ satisfies the relation $b_1 Q_1+ \dots + b_m Q_m=O$, then it is contained in some coset $R+Z$, where $R$ a torsion point of $E_{\mu_0}^m$. We let $X=E_{\mu_0}^m/Z$. This is a 1-dimensional abelian variety and we set $\phi:E_{\mu_0}^m \rightarrow X$ to be the quotient morphism. This induces the linear map $d \phi:Lie (E_{\mu_0}^m) \rightarrow Lie (X)$. If we identify $Lie (E_{\mu_0}^m)$ with $\C^m$ and $ Lie (X)$ with $\C$, then $d \phi$ corresponds to a complex vector $\bo{l} \in \C^m$ acting on $\C^m$ as a scalar product. Note that $\bo{l}$ depends only on $Z$ and therefore on $\bo{b}$.

We set $Q_j(t)=\exp_{E_{\mu_0}}(w_j(t))$. For $t\in D$, if $(Q_1(t), \dots , Q_m(t))\in R+Z$, with $R$ of finite order, then $\phi (Q_1(t), \dots , Q_m(t))=\phi (R)$ and there are $d_1,e_1,\dots, d_m,$ $e_m \in \Q$ with $\exp_{E_{\mu_0}^m}(d_1 h + e_1 k, \dots ,d_m h + e_m k)=R$ and $$d \phi (w_1(t), \dots ,w_m(t)) = d \phi  (d_1 h + e_1 k, \dots ,d_m h + e_m k).$$ 

We define $S^{(2)}(\bo{a},\bo{b},T)$ to be the set of points of $S$ of coordinates $(p_{1},q_{1},\dots , r_m, s_m )$ such that there exist $a_{n+1},a_{n+2} \in \Z \cap [-T,T ]$ and $d_1,e_1,\dots, d_m,e_m \in \Q$ of height at most $T$ with
\begin{equation}\label{system2}
\lg
\begin{array}{c}
a_{1}p_{1}+\dots + a_{n} p_{n}=a_{n+1},\\
a_{1}q_{1}+\dots + a_{n} q_{n}=a_{n+2},\\
\bo{l} \cdot (r_1 h + s_1 k, \dots , r_m h + s_m k) = \bo{l} \cdot (d_1 h + e_1 k, \dots ,d_m h + e_m k).
\end{array}
\right.
\end{equation} 

In the following lemma we are going to see $\bo{l}$ as a vector in $\R^{2m}$. The last equation above is an equality of complex numbers but it corresponds to two equalities of real numbers (recall that $h$ and $k$ are fixed complex numbers in this case).

\begin{lemma}\label{lemest2}
Under the hypotheses Theorem \ref{mainthm}, for every $\epsilon>0$ we have $$|S^{(2)}(\bo{a},\bo{b},T)|\ll_\epsilon (\max \{|\bo{a}|,  T\})^\epsilon$$
for all non-zero $\bo{a}$ and $\bo{b}$ and all $T>0$.
\end{lemma}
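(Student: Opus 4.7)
My plan is to follow the strategy of Lemma \ref{lemest}, with two twists designed to produce a bound uniform in $\bo{b}$: (i) since the $\C$-trace of $E_\lambda^n\times E_{\mu_0}^m$ may now contain $\bo{Q}$ nontrivially, I project the $E_{\mu_0}^m$-factor to its $1$-dimensional quotient $X_{\bo{b}}=E_{\mu_0}^m/Z_{\bo{b}}$ and work with the single complex elliptic logarithm $W:=\sum_j l_j w_j$; (ii) the vector $\bo{l}\in\R^{2m}$, which varies with $\bo{b}$, is built into a single definable family as the parameter, so that the constant from Proposition \ref{HabPila} is automatically uniform in $\bo{b}$.

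Before any point counting, I separately dispose of the degenerate case in which $\phi_{\bo{b}}(\bo{Q})$ is identically constant on $\cC$, equal to some $R\in X_{\bo{b}}(\C)$. If $R$ is non-torsion, the third equation in \eqref{system2} can never be solved with rational $d_j,e_j$, so $S^{(2)}(\bo{a},\bo{b},T)=\emptyset$. If $R$ is torsion, every $t\in D(\bo{a},\bo{b})$ must already satisfy the first two equations of \eqref{system2}, and Lemma \ref{lemfin} bounds the number of such $t$ by the uniform constant $\gamma$, giving $|S^{(2)}(\bo{a},\bo{b},T)|\ll 1$.

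In the remaining case, where $\phi_{\bo{b}}(\bo{Q})$ is non-constant on $\cC$, I introduce the definable family
\[
W^{(2)}=\lg (\bo{l},\alpha_1,\dots,\alpha_{n+2},d_1,e_1,\dots,d_m,e_m,p_1,q_1,\dots,r_m,s_m)\in\R^{2m}\times\R^{n+2+2m}\times S : \eqref{system2}\text{ holds}\rg,
\]
definable in $\R_{\text{an}}$, with $\bo{l}$ as the parameter and $(\alpha,d,e)$ as the rational coordinates to count. Setting $T'=\max\{|\bo{a}|,T\}$ and $\Sigma=(W^{(2)})_{\bo{l}}^\sim(\Q,T')$ for the $\bo{l}$ attached to $\bo{b}$, one has $S^{(2)}(\bo{a},\bo{b},T)\subseteq \pi_2(\Sigma)$, so it suffices to prove $|\pi_2(\Sigma)|\ll_\epsilon T'^\epsilon$. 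Assuming the contrary, Proposition \ref{HabPila} supplies a semi-algebraic arc $\delta_1$ and a non-constant analytic arc $\delta_2$; pulling back through $\Theta$ on a connected infinite $D'\subseteq D$ produces real analytic functions $\alpha_i(t),d_j(t),e_j(t)$ on $D'$ whose joint image lies in a real algebraic curve, hence satisfying $n+1+2m$ independent $\C$-algebraic relations, which by analytic continuation hold on all of $D$.

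Adding the two relations coming from \eqref{system2} itself,
\[
\alpha_1 z_1+\dots+\alpha_n z_n=\alpha_{n+1}f+\alpha_{n+2}g,\qquad W=h\sum_j l_j d_j+k\sum_j l_j e_j,
\]
the $2n+2m+3$ functions $\alpha_i,d_j,e_j,z_1,\dots,z_n,W$ satisfy $n+2m+3$ independent relations over $F':=\C(f,g,h,k)=\C(f,g)$, whence $\text{tr.deg}_{F'}F'(z_1,\dots,z_n,W)\leq n$. I then apply Lemma \ref{transc} to $G'':=E_\lambda^n\times X_{\bo{b}}$, whose constant part is $X_{\bo{b}}$ and whose dimension is $n+1$: since $X_{\bo{b}}$ is $1$-dimensional and non-isogenous to $E_\lambda$, Lemmas \ref{lemsubg1} and \ref{lemsubg2} force any proper algebraic subgroup of $G''$ absorbing $(\bo{P},\phi_{\bo{b}}(\bo{Q}))$ modulo $X_{\bo{b}}(\C)$ to impose either a non-trivial relation among $P_1,\dots,P_n$ (ruled out by hypothesis) or a constant value of $\phi_{\bo{b}}(\bo{Q})$ (ruled out in the current case). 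Thus $\text{tr.deg}_F F(z_1,\dots,z_n,W)\geq n+1$, which since $F'\subseteq F$ contradicts the upper bound $\leq n$. I expect the main obstacle to be precisely this uniformity in $\bo{b}$: arranging a \emph{single} definable family in which $\bo{l}$ plays the role of a parameter, and separately controlling the genuinely different behavior that arises when $\phi_{\bo{b}}(\bo{Q})$ is identically constant.
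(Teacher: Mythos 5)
Your proof follows essentially the same route as the paper's: the same definable family with $\bo{l}$ as a parameter to secure uniformity in $\bo{b}$, the same projection $\phi_{\bo{b}}$ to the one-dimensional quotient $X_{\bo{b}}$, the same transcendence count giving $\text{tr.deg}_F F(z_1,\dots,z_n,W)\le n$, and the same appeal to Lemma \ref{transc} with $G=E_\lambda^n\times X_{\bo{b}}$ and $G_0=\{O\}\times X_{\bo{b}}$. The only real divergence is how the degenerate conclusion ``$\phi_{\bo{b}}(\bo{Q})$ constant on $\cC$'' is excluded. The paper does not split cases in advance; it uses item (3) of Proposition \ref{HabPila}, namely $\pi_2(\delta(0))\in\pi_2(\Sigma)$, to produce a $t_0$ with $\phi(\bo{Q}(t_0))=\phi(R)$ for a torsion $R$, so the constant value is forced to be torsion, whence $\bo{Q}(D)\subseteq R+Z$ and a generic relation $Nb_1Q_1+\dots+Nb_mQ_m=O$ contradicting the hypotheses. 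Your up-front split is fine for a non-torsion constant value ($S^{(2)}(\bo{a},\bo{b},T)$ is indeed empty there), but the torsion subcase as you argue it does not work: Lemma \ref{lemfin} bounds the number of $t\in D$ satisfying \eqref{1rel} only for a \emph{fixed} tuple $(a_1,\dots,a_{n+2})$, whereas in $S^{(2)}(\bo{a},\bo{b},T)$ the integers $a_{n+1},a_{n+2}$ range over $O(|\bo{a}|^2)$ values, so this route yields only $\ll |\bo{a}|^2\le (T')^2$, not $\ll 1$. Fortunately that subcase is vacuous: if $\phi_{\bo{b}}(\bo{Q})$ is identically a torsion point of order $N$, then $N\bo{Q}$ lies identically in $Z\subseteq\ker\bo{b}$, so $Nb_1Q_1+\dots+Nb_mQ_m=O$ holds generically, contradicting the hypotheses of Theorem \ref{mainthm}. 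With that observation substituted for the erroneous count, your argument is complete and matches the paper's.
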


\begin{proof}
Set $T'=\max \{ |\bo{a}|,T \}$ and fix $\epsilon>0$.

Define $W$ to be the set of $\left(\bo{\nu},\alpha_1,\dots ,\alpha_{n+2},\chi_1,\psi_1,\dots, \chi_m,\psi_m, p_1, \dots , s_m  \right)  \in  \R^{2m +n+2+2m}\times S$, with
\begin{equation*}\label{sys3}
\lg
\begin{array}{c}
\alpha_{1}p_{1}+\dots + \alpha_{n} p_{n}=\alpha_{n+1},\\
\alpha_{1}q_{1}+\dots + \alpha_{n} q_{n}=\alpha_{n+2},\\
\bo{\nu} \cdot (r_1 h + s_1 k, \dots , r_m h + s_m k) =\bo{\nu} \cdot (\chi_1 h + \psi_1 k, \dots ,\chi_m h + \psi_m k).
\end{array}
\right.
\end{equation*}

This is a definable set in $\R_{an}$. We consider the fiber $W_{\bo{l}}$, where $\bo{l}$ is associated to $\bo{b}$ as explained earlier.

We set $\Sigma= (\Z^{n+2} \times \Q^{2m} \times S) \cap     W_{\bo{l}}^{\sim}(\Q,T')$, and note that  $\pi_2(\Sigma ) \supseteq S^{(2)}(\bo{a},\bo{b},T) $ where $\pi_2$ is the projection on $S$.
Then, $|S^{(2)}(\bo{a},\bo{b},T)| \leq |\pi_2(\Sigma)|$. We claim that $|\pi_2(\Sigma)|\ll_\epsilon (T')^\epsilon$, where the implied constant is independent of $\bo{l}$ and therefore independent of $\bo{b}$.  Suppose not, then by Proposition \ref{HabPila} there exists a continuous definable $\delta:[0,1] \rightarrow W_{\bo{l}}$ such that $\delta_1:=\pi_1 \circ \delta : [0,1] \rightarrow \R^{n+2+2m}$ is semi-algebraic and the composition $\delta_2:=\pi_2 \circ \delta : [0,1] \rightarrow S$ is non-constant. Moreover, $\delta_2 (0) \in   \pi_2(\Sigma)$. Therefore, there is a connected infinite subset $E\subseteq [0,1]$, such that $\delta_1(E)$ is contained in a real algebraic curve and $\delta_2(E)$ has positive dimension. Thus, there exists a connected infinite $D'\subseteq D$ with $ \Theta(D') = \delta_2(E)$. Moreover, there is $t_0 \in D$ with $\Theta(t_0)=\delta_2(0)$. Then, since $\delta_2(0) \in \pi_2(\Sigma)$, the point $(Q_1(t_0), \dots ,Q_m(t_0)) \in R+Z$ for some torsion point $R \in E_{\mu_0}^m$.

Now, on $D'$ we have that $\alpha_1,\dots , \alpha_{n+2},\chi_1, \dots ,\psi_m$ are $n+2+2m$ functions that generate an extension of transcendence degree at most 1 over $\C$. Moreover, $d \phi (w_1, \dots ,w_m)= d \phi (\chi_1 h + \psi_1 k, \dots , \chi_m h + \psi_m k)$ and note that $d \phi$ is a linear map.

Therefore, $\alpha_1, \dots ,\alpha_{n+2}$ and $d \phi (w_1, \dots ,w_m)=w'$ are $n+3$ functions on $D'$ satisfying $n+2$ algebraic relations over $\C$. Moreover, we have $\alpha_1 z_1+ \dots +\alpha_nz_n = \alpha_{n+1} f + \alpha_{n+2} g$.
Then, the $2n+3$ functions $z_1, \dots, z_n, \alpha_1, \dots, \alpha_{n+2},w'$ satisfy $n+3$ independent relations over $F=\C(f,g)$ on $D'$ and these extend on $D$. Therefore,
$$
\textsl{tr.deg}_F F ({z}_1, \dots ,{z}_n,w')\leq n,
$$
on $D$.

Now, we want to apply Lemma \ref{transc} to $G=E_\lambda^n \times X$ which has dimension $n+1$ and $G_0=\{(O,\dots, O) \} \times X$. Then, on $D$, the lemma implies that $$\exp_G(z_1,\dots, z_n,w')=(P_1,\dots , P_n, \phi(Q_1, \dots, Q_m))\in H+G_0(\C),$$ for some proper algebraic subgroup $H$ of $G$. Since the $P_i$ are independent and $X$ has dimension 1, we have that $H=E_\lambda^n \times X'$, where $X'$ is a torsion subgroup of $X$. Then
$\phi(Q_1(D), \dots ,Q_m(D))= \{Q' \}$ for some $Q' \in X(\C)$. But recall that there is $t_0 \in D$ with $\phi(Q_1(t_0), \dots, Q_m(t_0))=\phi (R)$ for some torsion point $R$ of $E_{\lambda_0}^m$. Then, $Q'=\phi(R)$ and therefore we have $(Q_1(D), \dots, Q_m(D)) \subseteq R+Z$. This contradicts the hypotheses of Theorem \ref{mainthm}, proving the claim and the lemma.
\end{proof}

\begin{lemma}\label{lemest3}
There exists $T_0\ll \max \{ |\bo{a}|,|\bo{b}|\}$ such that, if $t\in D(\bo{a},\bo{b})$,  then $\Theta(t) \in S^{(2)}(\bo{a},\bo{b},T_0)$.
\end{lemma}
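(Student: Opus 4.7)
The plan is to unpack each of the two defining conditions of $D(\bo{a},\bo{b})$ and recover the three equations of \eqref{system2}, with auxiliary integers and rationals of height $\ll \max\{|\bo{a}|,|\bo{b}|\}$ (all implied constants depending only on $D$).

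For the first condition, write $\sum_i a_i z_i(t) = a_{n+1} f(t) + a_{n+2} g(t)$ with $a_{n+1}, a_{n+2} \in \Z$. Since $D$ is compact, the sets $z_i(D), f(D), g(D)$ are bounded and $|\Delta(t)|$ is bounded below on $D$, forcing $|a_{n+1}|, |a_{n+2}| \ll |\bo{a}|$. Substituting $z_i = p_i f + q_i g$ and equating $\R$-coefficients with respect to the $\R$-basis $\{f(t), g(t)\}$ of $\C$ yields the first two equations of \eqref{system2}.

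For the second condition, $\sum_j b_j w_j(t) \in \Z h + \Z k$ is equivalent to $(Q_1(t),\dots,Q_m(t)) \in \ker(\bo{b}) \subseteq E_{\mu_0}^m$. By Lemma \ref{lemsubg3}, every component of $\ker(\bo{b})$ has the form $R + Z$ with $R$ torsion satisfying $b_h R = O$ for some nonzero coordinate $b_h$ of $\bo{b}$; hence the order of $R$ divides $|N(b_h)| \leq |\bo{b}|$. Choose the component $R_t + Z$ containing $(Q_j(t))_j$ and lift $R_t$ to $\bo{r}_t = (d_1 h + e_1 k, \dots, d_m h + e_m k)$ with rationals $d_j, e_j \in [0,1)$ of denominator $\leq |\bo{b}|$, so of height $\ll |\bo{b}|$.

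The third equation requires an exact equality in $\C$, not merely equivalence modulo the period lattice. Since $\exp_{E_{\mu_0}^m}(\bo{w}(t) - \bo{r}_t) \in Z$, we have $\bo{w}(t) - \bo{r}_t = \bo{v}_t + \bo{\mu}_t$ with $\bo{v}_t \in Lie(Z)$ and $\bo{\mu}_t \in \Lambda_{E_{\mu_0}^m} = (\Z h + \Z k)^m$; replacing $\bo{r}_t$ by $\bo{r}_t' := \bo{r}_t + \bo{\mu}_t$ shifts each $d_j, e_j$ by an integer (preserving denominators) and produces $\bo{w}(t) - \bo{r}_t' = \bo{v}_t \in Lie(Z)$, i.e., $\bo{l}\cdot\bo{w}(t) = \bo{l}\cdot\bo{r}_t'$, the third equation of \eqref{system2}. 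The main obstacle is to bound $\bo{\mu}_t$ linearly in $|\bo{b}|$: one chooses $\bo{v}_t$ in a fundamental domain of $\Lambda_Z := \Lambda_{E_{\mu_0}^m} \cap Lie(Z)$, so that compactness of $\bo{w}(D)$ gives $|\bo{\mu}_t| \ll 1 + \operatorname{covradius}(\Lambda_Z)$; an elementary Minkowski-type estimate for $\Lambda_Z$ (a sublattice of the fixed lattice $\Lambda_{E_{\mu_0}^m}$ cut out by the $\bo{b}$-linear form) yields $\operatorname{covradius}(\Lambda_Z) \ll |\bo{b}|$, hence $|\bo{\mu}_t| \ll |\bo{b}|$ and the new numerators of $d_j',e_j'$ are $\ll |\bo{b}|$ times their denominators, completing the proof.
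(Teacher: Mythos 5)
Your proposal is correct and follows essentially the same route as the paper: bound $a_{n+1},a_{n+2}$ by compactness, take a torsion representative $R$ of order $\leq|\bo{b}|$ via Lemma \ref{lemsubg3} with a lift in $[0,1)^{2m}$, and then adjust by a period obtained by reducing the $Lie(Z)$-component modulo the periods lying in $Lie(Z)$. The only cosmetic difference is that where you invoke a Minkowski-type bound $\operatorname{covradius}(\Lambda\cap Lie(Z))\ll|\bo{b}|$, the paper makes this explicit by exhibiting the $2(m-1)$ short periods $(b_2h,-b_1h,0,\dots,0)$, $(b_2k,-b_1k,0,\dots,0),\dots$ spanning $Lie(Z)$, each of norm $\ll|\bo{b}|$.
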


\begin{proof}

Fix $t\in D(\bo{a},\bo{b})$; then, $\Theta(t)$ satisfies \eqref{system2} for some integers $a_{n+1},a_{n+2}$ and rationals $d_1, e_1, \dots ,$ $d_m, e_m$. Now, since $D$ is compact, as before we have that the sets $z_i(D), f(D),$ $ g(D)$ are bounded and therefore we can choose  $a_{n+1},a_{n+2}$ bounded solely in terms of $|\bo{a}|$.

We need to prove that we can choose rationals $d_1, e_1, \dots ,d_m, e_m$ of height $\ll |\bo{b}|$ with
$$
(w_1(t),\dots ,w_m(t))-(d_1 h + e_1 k, \dots ,d_m h + e_m k) \in Lie( Z).
$$
We have that $(Q_1(t),\dots, Q_m(t))=\exp_{E_{\mu_0}^m}(w_1(t),\dots ,w_m(t))\in R+Z$, where $Z$ is the unique abelian subvariety of $E_{\mu_0}^m$ associated to the vector $\bo{b}$ as explained above and $R$ is a torsion point of $E_{\mu_0}^m$. Since $(Q_1(t),\dots, Q_m(t)) \in \ker (\bo{b})$, by Lemma \ref{lemsubg3}, we can suppose that $R$ has order at most $|\bo{b}|$ .

Let $\bo{w}=(w_1(t),\dots ,w_m(t))$. We know that there are rationals $d'_1, e'_1, \dots ,d'_m, e'_m \in [0,1)$ with  $\exp_{E_{\mu_0}^m} (d'_1 h + e'_1 k, \dots ,d'_m h + e'_m k)=R$. Therefore, $d'_1, e'_1, \dots ,d'_m, e'_m$ have denominators $\ll |\bo{b}|$ and we have
$$
\bo{w} - (d'_1 h + e'_1 k, \dots ,d'_m h + e'_m k) \in \Pi_{E_{\mu_0}^m}+ Lie (Z).
$$
 
We call $\bo{c}'=(d'_1 h + e'_1 k, \dots ,d'_m h + e'_m k)$.
We indicate by $\Vert \cdot \Vert$ the max norm on $Lie (E_{\mu_0}^m)=\C^m$. Note that $\Vert \bo{w}-\bo{c}' \Vert \ll 1$. Let $\bo{\eta}\in  \Pi_{E_{\mu_0}^m}$ and $\bo{x} \in Lie (Z)$ be such that $\bo{w}-\bo{c}'=\bo{\eta}+\bo{x}$.  The subspace $Lie (Z)$ is defined by the equation $b_1 w_1+ \dots +b_mw_m=0$. We can suppose $b_1\neq 0$. Consider the following $2(m-1)$ vectors: $\bo{\eta}_1=(b_2 h, -b_1 h,0 , \dots ,0)$, $\bo{\eta}_2=(b_2 k, -b_1 k,0 , \dots ,0)$, $\bo{\eta}_3=(b_3 h,0, -b_1 h,0 , \dots ,0)$, \dots , $\bo{\eta}_{2(m-1)}=(b_m k, 0 , \dots ,0, -b_1 k)$. These are $\R$-linearly independent elements of $\Pi_{E_{\mu_0}^m}$ whose $\R$-span is $Lie (Z)$. Then, there are $\alpha_1, \dots , \alpha_{2(m-1)} \in [0,1)$ with $\bo{x}=\bo{\eta}'+\bo{x}'$, where $\bo{x}'=\sum_{i=1}^{2(m-1)}\alpha_i \bo{\eta}_i \in Lie (Z)$ and $\bo{\eta}' \in \Pi_{E_{\mu_0}^m}$. Note that $\Vert \bo{x}' \Vert \ll |\bo{b}|$.

Finally, we have $\bo{w}-\bo{c}'=\bo{\eta}+\bo{\eta}'+\bo{x}'$ and 
$$
\Vert \bo{\eta}+\bo{\eta}' \Vert \leq \Vert\bo{w}-\bo{c}'\Vert+\Vert\bo{x}'\Vert\ll |\bo{b}|.
$$
If we set $\bo{\eta}+\bo{\eta}'+\bo{c}'=(d_1 h + e_1 k, \dots ,d_m h + e_m k)$, we have just found our rationals of height $\ll |\bo{b}|$ such that
$$
\bo{w}-(d_1 h + e_1 k, \dots ,d_m h + e_m k) \in Lie (Z).
$$
\end{proof}
 
By Lemma \ref{lemfin} we have $|D(\bo{a},\bo{b})|\ll |S^{(2)}(\bo{a},\bo{b}, T_0)|$ and the claim of Proposition \ref{estim} follows from Lemma \ref{lemest2}. 

\subsection{Case (3)}

To deal with case (3), (curve in $E_\lambda^n \times \Gm^m$, $\lambda$ not constant), one can follow the same line as case (2). Here, one has that, for all $\bo{b} \in \Z^m\setminus \{ 0\} $, there is a codimension 1 subtorus $Z$ of $\Gm^m$, depending only on $\bo{b}$, such that, if a point $(u_1, \dots, u_m)\in \Gm^m$ satisfies $u_1^{b_1} \cdots u_m^{b_m}=1$, then it is contained in some coset $RZ$, where $R$ a torsion point of $\Gm^m$ of order at most $|\bo{b}|$. Let $X=\Gm^m/Z$. This is a 1-dimensional torus and we set again $\phi:\Gm^m \rightarrow X$ to be the quotient morphism. This induces the linear map $d \phi:Lie (\Gm^m) \rightarrow Lie (X)$ which again corresponds to a complex vector $\bo{l} \in \C^m$ acting on $\C^m$ as a scalar product.

We define $S^{(3)}(\bo{a},\bo{b},T)$ to be the set of points of $S$ of coordinates $(p_{1},q_{1},\dots , r_m, s_m )$ such that there exist $a_{n+1},a_{n+2} \in \Z \cap [-T,T ]$ and $d_1,\dots, d_m \in \Q$ of height at most $T$ with
\begin{equation*}\label{system5}
\lg
\begin{array}{c}
a_{1}p_{1}+\dots + a_{n} p_{n}=a_{n+1},\\
a_{1}q_{1}+\dots + a_{n} q_{n}=a_{n+2},\\
\bo{l} \cdot (r_1  +2 \pi i s_1 , \dots , r_m + 2 \pi i s_m) = \bo{l} \cdot (2 \pi i d_1 , \dots , 2 \pi i d_m ).
\end{array}
\right.
\end{equation*} 

Following the same line it is possible to prove the analogous of Lemma \ref{lemest2} and \ref{lemest3} and to obtain the claim of Proposition \ref{estim} in case (3) using again Lemma \ref{lemfin}.

\section{Small generators of the relations lattices}

In this section we prove general facts about linear relations on elliptic curves and multiplicative relations on $\Gm$.

For a point $(\alpha_1, \dots ,\alpha_N) \in \overline{\Q}^N$, the absolute logarithmic Weil height $h(\alpha_1, \dots ,\alpha_N)$ is defined by
$$
h(\alpha_1, \dots ,\alpha_N)= \frac{1}{[\Q(\alpha_1, \dots ,\alpha_N):\Q]} \sum_v \log \max \{1,| \alpha_1|_v, \dots ,|\alpha_N|_v\},
$$
where $v$ runs over a suitably normalized set of valuations of $\Q(\alpha_1, \dots ,\alpha_N)$.

Let $\theta$ be an algebraic number and consider the Legendre curve $E=E_\theta$ defined by the equation
$ Y^2=X(X-1)(X-\theta)$. Moreover, let $P_1, \dots , P_n$ be points on $E$, linearly dependent over $\Z$, defined over some finite extension $K$ of $\Q(\theta)$ of degree $\kappa=[K:\Q]$. Suppose that $P_1, \dots , P_n$ have N\'eron-Tate height $\widehat{h}$ at most $q\geq 1$
(for the definition of N\'eron-Tate height, see for example p.\,255 of \cite{Masser88}).
We define
$$
L(P_1, \dots , P_n)=\{ (a_1, \dots , a_n) \in \Z^n: a_1P_1+\dots +a_nP_n= O \}.
$$
This is a sublattice of $\Z^n$ of some positive rank $r$. We want to show that $L(P_1, \dots , P_n)$ has a set of generators with small max norm 
$|\bo{a}|=\max \{|a_1|, \dots ,|a_n|\}$.

\begin{lemma}[\cite{linrel}, Lemma 6.1]\label{lemgeneratorsell}
Under the above hypotheses, there are generators $\bo{a}_1, \dots , \bo{a}_r$ of $L(P_1, \dots , P_n)$ with
$$
|\bo{a}_i|\leq \gamma_1  \kappa^{\gamma_{2}} (h(\theta)+1)^{2n} q^{\frac{1}{2}(n-1)},
$$
for some positive constants $\gamma_1, \gamma_2$ depending only on $n$.
\end{lemma}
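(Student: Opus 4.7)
The plan is to bound the successive minima $\lambda_1\le\dots\le\lambda_r$ of $L=L(P_1,\dots,P_n)$ in $\Z^n$ under the max-norm $|\cdot|$ and to conclude via a classical lemma from the geometry of numbers (Mahler; see e.g.\ Cassels, \emph{An Introduction to the Geometry of Numbers}, Ch.~V, Lemma~8), which yields a basis $\bo{a}_1,\dots,\bo{a}_r$ of $L$ with $|\bo{a}_i|\le\max\{1,i/2\}\,\lambda_i$. Thus it suffices to bound $\lambda_r$ by the right-hand side of the claim.

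The estimate on $\lambda_r$ comes from a pigeonhole argument fed by Masser's lower bound on N\'eron--Tate heights. Write $s=n-r$ for the rank of $\Z P_1+\dots+\Z P_n$ modulo torsion. Cauchy--Schwarz for the N\'eron--Tate pairing and $\widehat h(P_i)\le q$ give $\widehat h(a_1P_1+\dots+a_nP_n)\le n^2|\bo{a}|^2q$, so the image of the cube $B_M=\{\bo{a}\in\Z^n:|\bo{a}|\le M\}$ under $\bo{a}\mapsto\sum a_iP_i$ lies inside the finite subset of $E_\theta(K)$ of N\'eron--Tate height $\le n^2M^2q$. Masser's lower bound \cite{Masser88,Masser89} provides a constant $c_1\gg\kappa^{-\gamma'}(h(\theta)+1)^{-4n}$ such that every non-torsion $Q\in E_\theta(K)$ has $\widehat h(Q)\ge c_1$; standard lattice-point counting in $E_\theta(K)/\mathrm{tors}$, viewed as a Euclidean lattice in $\R^s$ via $\widehat h^{1/2}$, then bounds the image cardinality by at most $C(n)\,t\,(n^2M^2q/c_1)^{s/2}$, where the torsion order $t$ is itself $\ll\kappa^{\gamma''}$ by Merel/Masser. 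Pigeonhole on $B_M$ produces a nonzero element of $L$ of max-norm $\le 2M$ as soon as $(2M+1)^n$ exceeds this count; solving for $M$ forces $\lambda_1\ll\gamma_1\kappa^{\gamma_2}(h(\theta)+1)^{2n}q^{(n-1)/2}$, the extremal case being $r=1$, $s=n-1$.

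To bound each $\lambda_k$ by the same quantity one iterates: having produced short independent relations $\bo{a}_1,\dots,\bo{a}_{k-1}\in L$ whose $\Z$-span is primitive in $\Z^n$, we replace $\Z^n$ by a primitive complement of rank $n-k+1$ and run the same pigeonhole argument on the induced $(n-k+1)$-tuple of points, which by Gram--Schmidt projection still satisfy a N\'eron--Tate bound of order $q$ up to a dimensional constant. The base field, and hence Masser's constant $c_1$, is unaffected, so the exponent of $q$ stays pinned at $(n-1)/2$ throughout and the constants $\gamma_1,\gamma_2$ only inflate polynomially in $n$.

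The main technical obstacle is the bookkeeping in this iteration: one has to verify that passing to a primitive complement does not inflate the relevant N\'eron--Tate heights, that the torsion subgroup intercepted by $L$ is of size polynomial in $\kappa$ and $h(\theta)$ (to absorb the factor $t$), and that Mahler's bound transfers the successive-minimum estimates to genuine lattice generators. Each of these is by now standard, and the accounting is written out in \cite{linrel}, Section~6.
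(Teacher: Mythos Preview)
The paper itself offers no proof of this lemma beyond the citation to \cite{linrel}, Lemma~6.1, so there is nothing to compare against directly. Your sketch does follow the standard Masser strategy (pigeonhole on points of bounded N\'eron--Tate height, fed by a Lehmer-type lower bound, then geometry of numbers to pass from successive minima to lattice generators), which is indeed the method of \cite{Masser88} and its adaptation in \cite{linrel}.

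Two bookkeeping slips are worth flagging. First, your quoted Masser lower bound $c_1\gg\kappa^{-\gamma'}(h(\theta)+1)^{-4n}$ should not carry an $n$ in the exponent: the height lower bound for a single non-torsion point on $E_\theta$ depends only on $\kappa$ and $h(\theta)$, not on how many points you are simultaneously considering. With the exponent $-4n$ as you wrote it, feeding $c_1^{-(n-1)/2}$ back into the pigeonhole gives $(h(\theta)+1)^{2n(n-1)}$ rather than the claimed $(h(\theta)+1)^{2n}$; with the correct absolute exponent (of size roughly $4$) the arithmetic closes. Second, the iteration step is looser than you suggest: passing to a primitive complement in $\Z^n$ and ``Gram--Schmidt projecting'' does not leave the N\'eron--Tate heights of the induced points bounded by $q$ up to a dimensional constant, because the basis vectors of the complement can have large max-norm. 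The usual remedy is either to bound $\lambda_r$ in one stroke via a covolume estimate for $L$ (controlling the regulator of the image group from below via Masser's bound), or to iterate more carefully inside $L$ rather than in $\Z^n$. You are right that this accounting is carried out in \cite{linrel}, Section~6, and the paper is content to cite it.
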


Analogously, consider a vector $(\alpha_1, \dots , \alpha_m) \in (K\setminus \{0\})^m$, for some number field $K$, with $\kappa=[K:\Q]$, as above. Suppose the $\alpha_j$ are multiplicatively dependent. We define
$$
L(\alpha_1, \dots , \alpha_m)=\{ (b_1, \dots , b_m) \in \Z^m: \alpha_1^{b_1} \dots  \alpha_m^{b_m}= 1 \}.
$$
Fix $h \geq 1$ with $h(\alpha_j)\leq h$ for all $j=1,\dots ,m$.

\begin{lemma}\label{lemgeneratorsmult}
Under the above hypotheses, there are generators $\bo{b}_1, \dots , \bo{b}_r$ of $L(\alpha_1, \dots , \alpha_m)$ with
$$
|\bo{b}_i|\leq \gamma_3  \kappa^{\gamma_{4}}  h^{m-1},
$$
for some positive constants $\gamma_3, \gamma_4$ depending only on $m$.
\end{lemma}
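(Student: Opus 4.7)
The argument is the multiplicative analogue of Lemma \ref{lemgeneratorsell} (Lemma 6.1 of \cite{linrel}), with the N\'eron--Tate height replaced by the logarithmic Weil height and the real period embedding replaced by the logarithmic embedding of $K^\times$ at all places of $K$. The plan is to realise $L = L(\alpha_1,\dots,\alpha_m)$ as a finite-index sublattice of the kernel of a linear map whose matrix entries are $\ll h$, and then apply Minkowski's theorem on successive minima to extract a basis of small norm.

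First I would set up the embedding. Let $S$ be the set of places of $K$ consisting of all archimedean places together with all non-archimedean places at which some $\alpha_j$ has nontrivial valuation, and for $v\in S$ let $d_v = [K_v:\Q_v]/\kappa$ be the normalised local degree. Define
\[
\Psi : \R^m \longrightarrow \R^{|S|}, \qquad \Psi(\bo{b})_v \;=\; \sum_{j=1}^m b_j\, d_v \log|\alpha_j|_v.
\]
If $\bo{b}\in L$ then $\prod_j \alpha_j^{b_j} = 1$ and hence $\Psi(\bo{b}) = 0$; conversely, if $\Psi(\bo{b}) = 0$ then $\prod_j \alpha_j^{b_j}$ has absolute value $1$ at every place of $K$ and is therefore a root of unity. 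Thus $L$ has finite index in $L' := \ker \Psi \cap \Z^m$, bounded by $|\mu(K)|\ll \kappa^{2}$.

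The core step is geometry of numbers applied to $L'$. Since $|d_v \log |\alpha_j|_v| \le h(\alpha_j) \le h$ for every $v\in S$, every entry of the matrix of $\Psi$ is bounded by $h$. Writing $r$ for the rank of $L$, the rank of $\Psi$ is $m - r$, and Hadamard's inequality applied to the non-vanishing $(m-r)\times(m-r)$ minors of $\Psi$ shows that the covolume of $L'$ inside $\ker\Psi$ (with its induced Euclidean structure) is $\ll_{m}\, h^{m-r}\le h^{m-1}$. Minkowski's second theorem then gives that the product of the successive minima of $L'$ is $\ll_{m}\, h^{m-1}$, and a standard construction (Mahler's lemma on reduced bases) extracts a basis $\bo{b}'_1,\dots,\bo{b}'_r$ of $L'$ with max norm $\ll_{m}\, h^{m-1}$. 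Passing from $L'$ to the index-$\ll \kappa^2$ sublattice $L$ via Smith normal form multiplies the norms by at most $|\mu(K)|$, yielding the bound $|\bo{b}_i|\le \gamma_3\, \kappa^{\gamma_4}\, h^{m-1}$.

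The main technical nuisance is controlling the covolume of $L'$: one needs each column of $\Psi$ to have $\ell^2$ norm $\ll h$, which follows from $\sum_{v\in S} d_v |\log|\alpha_j|_v| \le 2\, h(\alpha_j)$, an easy consequence of the product formula. Once this is in place, the Hadamard estimate on the minors of $\Psi$ and the application of Minkowski's theorem are routine, and the bound on $|\mu(K)|$ (ultimately from the elementary fact that $\Q(\zeta_n)\subseteq K$ forces $\phi(n)\le \kappa$) supplies the polynomial dependence on $\kappa$.
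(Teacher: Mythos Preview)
Your argument has a genuine gap in the covolume step. You assert that Hadamard's inequality on the $(m-r)\times(m-r)$ minors of $\Psi$ bounds the covolume of $L'=\ker\Psi\cap\Z^m$ inside $\ker\Psi$ by $\ll_m h^{m-r}$. This is false: an upper bound on the column norms of a real matrix does not control the covolume of the integer kernel. Concretely, for $m=2$ take $\alpha_1=\beta^{q}$, $\alpha_2=\beta^{-p}$ with $\gcd(p,q)=1$ and $\beta\in K^\times$ not a root of unity of height $h(\beta)=\epsilon$; then $h(\alpha_j)\le h$ once $\epsilon\le h/\max(p,q)$, yet $L'=L=\Z\cdot(p,q)$ has covolume $\sqrt{p^2+q^2}$, which is unbounded as $p,q\to\infty$. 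The correct identity is
\[
\mathrm{covol}(L')\;=\;\frac{J(\Psi)}{\mathrm{covol}\big(\Psi(\Z^m)\big)},
\]
where $J(\Psi)$ is the product of the non-zero singular values of $\Psi$. Hadamard does give $J(\Psi)\ll_m h^{m-r}$, but you also need a \emph{lower} bound on $\mathrm{covol}(\Psi(\Z^m))$, equivalently a lower bound on the height of the shortest non-torsion element of the group generated by the $\alpha_j$.

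This missing ingredient is exactly a Lehmer-type height lower bound, and it is precisely what the paper supplies. The paper's proof simply quotes Masser's Theorem~$\Gm$ from \cite{Masser88}, which already gives generators of $L$ with max norm at most $m^{m-1}\omega(h/\eta)^{m-1}$, where $\eta=\inf\{h(\alpha):\alpha\in K^\times\text{ not a root of unity}\}$; it then bounds $\omega\ll\kappa^2$ (Euler's $\phi$) and $\eta^{-1}\ll\kappa^{O(1)}$ (Dobrowolski, or Blanksby--Montgomery). Your geometry-of-numbers scheme can be repaired by inserting the same $\eta$-bound to get $\mathrm{covol}(\Psi(\Z^m))\gg_m\eta^{m-r}$, hence $\mathrm{covol}(L')\ll_m(h/\eta)^{m-r}$; after that, Minkowski and the passage from $L'$ to $L$ go through as you wrote. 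With this fix your argument essentially reproduces Masser's proof, and it is the $\eta^{-1}$ factor, not only $|\mu(K)|$, that carries the polynomial dependence on $\kappa$.
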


\begin{proof}
Suppose first that not all the $\alpha_j$ are roots of unity.
By Theorem $\Gm$ of \cite{Masser88}, if $\alpha_1, \dots, \alpha_m$ are multiplicatively dependent algebraic numbers of height at most $h \geq \eta$, 
then $L(\alpha_1, \dots , \alpha_m)$ is generated by vectors with max norm at most 
$$
m^{m-1} \omega \left(\frac{h}{\eta}\right)^{m-1},
$$
where $\omega$ is the number of roots of unity in $K$ and $\eta=\inf h(\alpha)$, for $\alpha\in K\setminus \{0\}$ not a root of unity.
We need to bound $\omega$ and $\eta$. The constants $\gamma_5, \dots, \gamma_8$ are absolute constants.

The first bound is elementary since the roots of unity in $K$ form a cyclic group generated by, say, $\zeta_N$ a primitive $N$-th root of unity. We must then have $\phi(N)\leq \kappa$ ($\phi$ indicates the Euler function) and we know $\phi(N) \geq \gamma_5 \sqrt{N}$. Therefore we can take 
\begin{equation}\label{tor}
\omega \leq \gamma_6 \kappa^2.
\end{equation}

For $\eta$, an estimate of the form $\eta\geq \gamma_7 \kappa^{-\gamma_8}$ would be sufficient for us. We can use the celebrated result by Dobrowolski \cite{Dobro}, or a previous weaker result by Blanksby and Montgomery \cite{BlaMon}.

In case all the $\alpha_j$ are all torsion, it is clear that one can take $|\bo{b}_i|\leq \omega$ and use \eqref{tor}.
\end{proof}

\section{Bounded height}

In this section we see that the height of the points on the curve $\cC$ for which there is a dependence relation between the $P_i$ is bounded and a few consequences of this fact.

Let $k$ be a number field over which $\cC$ is defined. Suppose also that the finitely many points we excluded from $\cC$ to get $\widehat{\cC}$, which are algebraic, are defined over $k$.

Let $\cC'$ be the set of points $\bo{c}_0 \in \widehat{\cC}(\C)$ for which we have that $P_1(\bo{c}_0),\dots ,P_n(\bo{c}_0)$ satisfy a non-trivial relation on $E_{\lambda(\bo{c}_0)}$ and $Q_1(\bo{c}_0),\dots ,Q_{m}(\bo{c}_0)$ satisfy a non-trivial relation on $E_{\mu(\bo{c}_0)}$ (or $u_1(\bo{c}_0),\dots ,u_{m}(\bo{c}_0)$ are multiplicatively dependent).
Since  $\cC$ is defined over $\Qbar$,  the points in $\cC'$ must be algebraic. Moreover, by Silverman's Specialization Theorem \cite{Sil83}, there exists $\gamma_1>0$ such that 
\begin{equation}\label{boundedheight}
h(\bo{c}_0)\leq \gamma_1,
\end{equation}
for all $\bo{c}_0 \in \cC'$.

We see now a few consequences of this bound.
If $\delta>0$ is a small real number, let us call
\begin{equation*} 
\cC^{\delta}=\lg \bo{c} \in \cC: \Vert \bo{c} \Vert \leq \frac{1}{\delta}, \Vert \bo{c}-\bo{c}' \Vert \geq  \delta \mbox{ for all $\bo{c}' \in \cC\setminus \widehat{\cC}$} \rg .
\end{equation*}
Here $\Vert \cdot \Vert$ indicates the standard norm on $\C^{2n+2m+2}$ or $\C^{2n+m+1}$.

\begin{lemma}\label{lemconj}
There is a positive $\delta$ such that there are at least $\frac{1}{2}[k(\bo{c}_0):k]$ different $k$-embeddings $\sigma$ of $k(\bo{c}_0)$ in $\C$ such 
that $\sigma (\bo{c}_0)$ lies in $\cC^{\delta}$ for all $\bo{c}_0 \in \cC'$. 
\end{lemma}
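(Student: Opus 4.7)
The plan is to handle the two conditions defining $\cC^\delta$ separately, showing that for a suitable uniform $\delta>0$ each fails for at most $D/4$ of the $k$-embeddings of $K := k(\bo{c}_0)$; a union bound will then leave at least $D/2 = \tfrac{1}{2}[k(\bo{c}_0):k]$ of them mapping $\bo{c}_0$ into $\cC^\delta$. Let $\Sigma$ denote the set of $k$-embeddings $K \hookrightarrow \C$, so $|\Sigma|=D:=[K:k]$. The key input is \eqref{boundedheight}, which gives $h(\bo{c}_0) \leq \gamma_1$ uniformly over $\bo{c}_0 \in \cC'$. Since $\Sigma$ is a subset of the $[K:\Q]$ $\Q$-embeddings of $K$, and the full $\Q$-sum equals exactly the archimedean contribution to $[K:\Q]\,h(x)$, one has the basic inequality
\[
\sum_{\sigma \in \Sigma} \log^+|\sigma(x)| \leq [K:\Q]\, h(x) = D[k:\Q]\,h(x), \qquad x \in K.
\]

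For the bound $\Vert \sigma(\bo{c}_0) \Vert \leq 1/\delta$, apply this to each coordinate $x_i$ of $\bo{c}_0$ (using $h(x_i) \leq h(\bo{c}_0) \leq \gamma_1$), sum over $i$, and invoke Markov's inequality: at most $D/4$ embeddings $\sigma$ satisfy $\max_i |\sigma(x_i)| > e^{B}$, provided $B \geq 4N[k:\Q]\gamma_1$, where $N$ is the ambient dimension. Up to the $\sqrt{N}$ comparison between max and Euclidean norms, this provides a uniform first threshold $\delta_1 > 0$.

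For the separation $\Vert \sigma(\bo{c}_0) - \bo{c}'\Vert \geq \delta$ over $\bo{c}' \in F := \cC\setminus \widehat{\cC}$, the set $F$ is a fixed finite set of $k$-rational points, and $\bo{c}_0 \in \widehat{\cC}$ gives $\bo{c}_0 \neq \bo{c}'$. Hence for every $\bo{c}' \in F$ one can pick an index $i^\ast$ (depending on $\bo{c}_0$ and $\bo{c}'$) with $y := x_{i^\ast} - c'_{i^\ast} \in K^\times$. Standard properties of the Weil height give $h(y) \leq \gamma_1 + \max_{\bo{c}' \in F} h(\bo{c}') + \log 2 =: C$, a uniform constant; applying the basic inequality to $1/y$ (with $h(1/y) = h(y)$) yields
\[
|\{\sigma \in \Sigma : |\sigma(y)| < \delta\}| \leq \frac{D[k:\Q]\, C}{\log(1/\delta)}.
\]
Since a $k$-embedding fixes $c'_{i^\ast} \in k$, the inequality $\Vert \sigma(\bo{c}_0) - \bo{c}'\Vert < \delta$ forces $|\sigma(y)| < \delta$; summing over the $|F|$ choices of $\bo{c}' \in F$ bounds the number of ``bad'' embeddings by a uniform constant times $D/\log(1/\delta)$, which becomes $\leq D/4$ for $\delta$ less than some uniform $\delta_2$. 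Setting $\delta := \min(\delta_1,\delta_2)$ concludes the proof. The only delicate point is that the index $i^\ast$ varies with $\bo{c}_0$ and $\bo{c}'$, but since $F$ is finite and fixed, the height of $y$ is uniformly bounded, which is exactly what is needed for the estimate to be independent of $\bo{c}_0 \in \cC'$.
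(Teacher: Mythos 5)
Your proof is correct. The paper's own ``proof'' is just a citation to Lemma 8.2 of \cite{MZ14a}, and the argument there is essentially the one you give --- the bounded height \eqref{boundedheight} plus a Markov-type count of the archimedean contributions to the height, applied both to the coordinates themselves (to keep $\Vert\sigma(\bo{c}_0)\Vert$ bounded) and to their differences from the finitely many excluded $k$-rational points (to keep $\sigma(\bo{c}_0)$ away from $\cC\setminus\widehat{\cC}$) --- so you have in effect reconstructed the proof that the paper outsources.
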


\begin{proof}
See Lemma 8.2 of \cite{MZ14a}.
\end{proof}

\begin{remark}
We would like to point out that it might be possible to avoid the restriction to a compact domain and the use of the previous lemma by exploiting the work of Peterzil and Starchenko \cite{PetStar}, who proved that it is possible to define the Weierstrass $\wp$ function globally in the structure $\R_{\text{an,exp}}$.
\end{remark}

\begin{lemma}\label{lemboundheightdeg}
There exists a positive constant $\gamma_2$ such that, for every $\bo{c}_0 \in \cC'$, every $i=1, \dots , n$, and every $j=1,\dots , m$ we have
$$
\widehat{h}(P_i(\bo{c}_0)), \widehat{h}(Q_j(\bo{c}_0))\leq \gamma_2.
$$
\end{lemma}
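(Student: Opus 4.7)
The plan is to reduce the Néron--Tate heights of the specialized points to Weil heights of the coordinates of $\bo{c}_0$, all of which are already controlled by \eqref{boundedheight}.

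First, since $x_i, y_i, u_j, v_j, \lambda, \mu$ (or $x_i, y_i, \lambda, u_j$ in case (3)) are coordinate functions on $\cC \subseteq \mathbb{A}^{2n+2m+2}$, and the affine Weil height of an algebraic point in affine space dominates the Weil height of each coordinate, the bound $h(\bo{c}_0) \leq \gamma_1$ from \eqref{boundedheight} immediately yields uniform bounds
$$
h(x_i(\bo{c}_0)),\, h(y_i(\bo{c}_0)),\, h(u_j(\bo{c}_0)),\, h(v_j(\bo{c}_0)),\, h(\lambda(\bo{c}_0)),\, h(\mu(\bo{c}_0)) \ll 1
$$
for every $\bo{c}_0 \in \cC'$, with an absolute implied constant depending only on $\cC$.

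Next, I invoke a standard comparison between the Néron--Tate height and the naive Weil height of the $x$-coordinate on an elliptic curve in Weierstrass form: by Silverman's specialization-type estimates (see e.g.\ the results recalled in \cite{Sil83} or the effective version in Appendix C of \cite{Zannier}), there exist absolute constants $c_1, c_2 > 0$ such that, for every $\theta \in \Qbar \setminus \{0,1\}$ and every $P \in E_\theta(\Qbar)$,
$$
\bigl| \widehat{h}(P) - \tfrac{1}{2} h(x(P)) \bigr| \leq c_1\, h(j(E_\theta)) + c_2.
$$
Since $E_\lambda$ is in Legendre form, the $j$-invariant is a rational function in $\lambda$, hence $h(j(E_{\lambda(\bo{c}_0)})) \ll h(\lambda(\bo{c}_0)) + 1 \ll 1$ uniformly for $\bo{c}_0 \in \cC'$, and analogously for $\mu$.

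Combining, for every $\bo{c}_0 \in \cC'$, every $i$, and every $j$,
$$
\widehat{h}(P_i(\bo{c}_0)) \leq \tfrac{1}{2} h(x_i(\bo{c}_0)) + c_1 h(j(E_{\lambda(\bo{c}_0)})) + c_2 \ll 1,
$$
and similarly $\widehat{h}(Q_j(\bo{c}_0)) \ll 1$. Taking $\gamma_2$ to be the maximum of these bounds yields the claim. (In case (3), where the second set of points lives on $\Gm$, the inequality $h(u_j(\bo{c}_0)) \leq \gamma_2$ is immediate from the bound on $h(\bo{c}_0)$.) There is no essential obstacle here: the content of the lemma is that \eqref{boundedheight} is strong enough to control canonical heights, which follows from the fact that the family $E_\lambda$ has bounded Weierstrass coefficients over the compact part of $\cC$ where the specializations land.
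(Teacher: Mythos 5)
Your proposal is correct and follows essentially the same route as the paper: bound the naive Weil height of each $P_i(\bo{c}_0)$, $Q_j(\bo{c}_0)$ by $h(\bo{c}_0)\leq\gamma_1$, then compare the N\'eron--Tate height with the naive height with an error term controlled by $h(\lambda(\bo{c}_0))+1$ (the paper cites Zimmer for this comparison, which is equivalent to the Silverman-type estimate you invoke, since for the Legendre equation both the coefficients and the $j$-invariant have height $\ll h(\lambda)+1$). The only cosmetic difference is the choice of reference for the height comparison; the argument is the same.
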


\begin{proof}
We have $h(P_i(\bo{c}_0)) \leq h(\bo{c}_0)$ and, using the work of Zimmer \cite{Zimmer}, we have $\widehat{h}(P_i(\bo{c}_0))\leq h(P_j(\bo{c}_0)) +\gamma_3(h(\lambda(\bo{c}_0))+1)$. The same inequalities hold for the $Q_j$. The claim now follows from (\ref{boundedheight}).
\end{proof}

\section{Proof of Theorems \ref{mainthm} and \ref{mainthm2}}

Let us start with Theorem \ref{mainthm}.

By Northcott's Theorem \cite{Northcott1949} and the bound (\ref{boundedheight}) for the height, we only need to bound the degree of $\bo{c}_0$ over $k$, for all the $\bo{c}_0 \in \cC'$. 

Fix one $\bo{c}_0 \in \cC'$ and $d_0=[k(\bo{c}_0):k]$ which we suppose large. First, by Lemma \ref{lemconj}, we can choose $\delta$, independent of $\bo{c}_0 $, such that $\bo{c}_0$ has at least 
$\frac{1}{2}d_0$ conjugates in $\cC^{\delta}$.
Now, since $\cC^{\delta}$ is compact, there are $\bo{c}_1,\dots,  \bo{c}_{\gamma_2} \in \widehat{\cC}$ with corresponding neighborhoods $N_{\bo{c}_1}, \dots ,N_{\bo{c}_{\gamma_2}}$ and $D_{\bo{c}_1}, \dots ,D_{\bo{c}_{\gamma_2}} \subseteq \pi(\widehat{\cC})$, where $D_{\bo{c}_i} \subseteq \pi (N_{\bo{c}_i})$ contains $\pi(\bo{c}_i)$ and is homeomorphic to a closed disc and we have that the $\pi^{-1}(D_{\bo{c}_i}) \cap N_{\bo{c}_i}$ cover $\cC^{\delta}$.

We can suppose that $D_{\bo{c}_1}$ contains $t_0^\sigma=\pi(\bo{c}_0^\sigma)$ for at least $\frac{1}{2\gamma_2}d_0$ conjugates $\bo{c}_0^\sigma$. Since each $t \in \pi(\cC)$ has a uniformly bounded number of preimages $\bo{c} \in \cC$, we can suppose we have at least $\frac{1}{\gamma_3}d_0$ distinct such $t_0^\sigma$ in $D_{\bo{c}_1}$.

Now, the corresponding points $P_1(\bo{c}_0^\sigma), \dots ,P_n(\bo{c}_0^\sigma)$, $Q_1(\bo{c}_0^\sigma), \dots ,Q_m(\bo{c}_0^\sigma)$ 
satisfy the same relations. So there are $\bo{a}=(a_{1},\dots , a_{n})\in \Z^n\setminus \{0\}$ and $\bo{b}=(b_{1},\dots , b_{m}) \in R_2^m\setminus \{0\}$ such that
\begin{equation}\label{eq1}
\lg
\begin{array}{c}
a_1P_1(\bo{c}_0^\sigma)+\dots +a_nP_n(\bo{c}_0^\sigma)= O \text{ on } E_{\lambda(\bo{c}_0^\sigma)},\\ 
b_1Q_1(\bo{c}_0^\sigma)+\dots +b_m Q_m(\bo{c}_0^\sigma)= O \text{ on } E_{\mu(\bo{c}_0^\sigma)}.
\end{array}
\right. 
\end{equation}

By Lemma \ref{lemboundheightdeg}, $\widehat{h}(P_i(\bo{c}_0^\sigma)),\widehat{h}(Q_j(\bo{c}_0^\sigma))\leq \gamma_4$. Moreover, the $P_i(\bo{c}_0^\sigma)$ and $Q_j(\bo{c}_0^\sigma)$ are defined over a number field $K$ of degree $\ll d_0$ over $\Q$. Therefore, applying Lemma \ref{lemgeneratorsell} and recalling \eqref{boundedheight}, we can suppose that 
\begin{equation}\label{smallgen}
|\bo{a}|,|\bo{b}|\leq\gamma_{5} d_0^{\gamma_{6}}.
\end{equation}
Recall that, in case $\Z \subsetneq R_2 = \Z+\rho \Z$, we set $|\bo{b}|=\max \{|N(b_1)|, \dots , |N( b_{m})|\}$ and we can just apply Lemma \ref{lemgeneratorsell} to $Q_1, \dots ,Q_m, \rho Q_1, \dots , \rho Q_m$ noting that $\widehat{h}(\rho Q_j)\ll \widehat{h}(Q_j)$.

Now, recall that, in Section \ref{periods}, on $ D_{\bo{c}_1}$ we defined $f,g$ to be generators of the period lattice of $E_\lambda$ and the elliptic logarithms $z_1, \dots ,z_n$ such that, if $\bo{c}$ is the only point in $N_{\bo{c}_1}$ above $t$,
\begin{equation*}\label{elllog}
\exp_\lambda (z_i(t))=P_i(\bo{c}),
\end{equation*}
on $D_{\bo{c}_1}$ and $h,k,w_1, \dots ,w_m$ as generators for the period lattice and elliptic logarithms of the $Q_j$ for $E_{\mu}$.

By \eqref{eq1}, we have that
\begin{equation*}
\lg
\begin{array}{c}
a_1z_1(t_0^\sigma)+\dots +a_nz_n(t_0^\sigma) \in \Z f(t_0^\sigma) + \Z g(t_0^\sigma),  \\
b_1w_1(t_0^\sigma)+\dots +b_m w_m(t_0^\sigma) \in \Z h(t_0^\sigma) + \Z k(t_0^\sigma).
\end{array}
\right.
\end{equation*}

Recall the definition of $D_{\bo{c}_1}(\bo{a},\bo{b})$ in \eqref{defD}.
By Proposition \ref{estim} and \eqref{smallgen}, we have that $|D_{\bo{c}_1}(\bo{a},\bo{b})|\ll_\epsilon d_0^{\gamma_6 \epsilon}$.
But by our choice of $D_{\bo{c}_1}$ we had at least $\frac{1}{\gamma_3}d_0$ points in $D_{\bo{c}_1}(\bo{a},\bo{b})$. Therefore, if we choose $\epsilon = \frac{1}{2 \gamma_6}$ we have a contradiction if $d_0$ is large enough.

We have just deduced that $d_0$ is bounded and, by (\ref{boundedheight}) and Northcott's Theorem, we have the claim of Theorem \ref{mainthm}.

Theorem \ref{mainthm2} can  be proved following the same line and combining Lemma \ref{lemgeneratorsell} with Lemma \ref{lemgeneratorsmult}.

\section{Proof of Theorem \ref{thmscheme}}\label{proofsch}

We recall our setting.
We have $\mathcal{E}_i \rightarrow S$ non-isotrivial elliptic schemes and, for all $i$, we let $\mc{A}_i$ be the $n_i$-fold fibered power of $\cE_i$ over $S$.
Moreover, we have $E_1, \dots ,E_p$ elliptic curves pairwise non-isogenous, which we consider as constant families over $S$.
We defined $\mc{A}$ to be the fibered product
$$
\mc{A}_1\times_S \dots \times_S \mc{A}_l \times_S E_1^{m_1}  \times_S\dots \times_S E_{p}^{m_p}\times_S \mathbb{G}_m^q
$$
 over $S$. We suppose that everything is defined over a number field $k$.

Fix $i_0$, with $1\leq i_0 \leq l$. For every $\bo{a}=(a_1, \dots , a_{n_{i_0}})\in \Z^{n_{i_0}} $ we have a morphism $\bo{a}:\cA_{i_0} \rightarrow \cE_{i_0}$ defined by
$$
\bo{a}(P_1,\dots, P_{n_{i_0}})=a_1 P_1 +\dots +a_{n_{i_0}} P_{n_{i_0}}.
$$
We identify the elements of $\Z^{n_{i_0}}$ with the morphisms they define. The fibered product $\bo{a}_1 \times_S \dots \times_S \bo{a}_r$, for $\bo{a}_1 , \dots , \bo{a}_r \in \Z^{n_{i_0}}$ defines a morphism $\cA_{{i_0}} \rightarrow \mathcal{B}$ over $S$ where $\mathcal{B}$ is the $r$-fold fibered power of $\cE_{{i_0}}$. Similarly, for $j_0$, $1\leq j_0 \leq p$, vectors $\bo{b}\in R_{j_0}^{m_{j_0}}$, where $R_{j_0}$ is the endomorphism ring of $E_{j_0}$, define morphisms from $E_{j_0}^{m_0}$ to $E_{j_0}$ and vectors $\bo{c}\in \Z^{q}$ define morphisms from $\mathbb{G}_m^q$ to $\mathbb{G}_m$. Therefore, square matrices with entries in $\Z$ or in an eventually larger $R_{j_0}$ and appropriate size will define endomorphisms of $\cA_{i_0}$, of $E_{j_0}^{m_{j_0}}$ or of $\mathbb{G}_m^q$.
Finally, we can take the fibered product of such endomorphisms to obtain an endomorphism of $\cA$ which will be represented by a block diagonal matrix whose blocks correspond to the endomorphisms defined above. These matrices form a ring which we call $R$.

For an $\alpha\in R$, the kernel of $\alpha$, $\text{ker} \, \alpha$ indicates the fibered product of $\alpha: \cA \rightarrow \cA$ with the zero section $S\rightarrow \cA$. We consider it as a closed subscheme of $\cA$.

\begin{lemma}\label{lemmat}
Let $G$ be a flat subgroup scheme of $\mc{A}$ of codimension $\geq d$; then, there exists an $\alpha \in R$ of rank $d$ such that $G\subseteq \ker \alpha$ and, for any $\alpha$ of rank $d$, $\ker \alpha$ is a flat subgroup scheme of codimension $d$.
\end{lemma}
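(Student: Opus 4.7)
The plan is to prove both halves by descending to the generic fibre of $\mathcal{A}\to S$ and invoking the algebraic-group structure results of Section~\ref{periods}.

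Write $K=k(S)$ and let $\eta$ denote the generic point of $S$. Because every component of $G$ dominates $S$, $G$ is the schematic closure of its generic fibre $G_\eta\subseteq\mathcal{A}_\eta$, so the codimension of $G$ in $\mathcal{A}$ equals the codimension of $G_\eta$ in $\mathcal{A}_\eta$. The non-isotriviality of each $\mathcal{E}_i$ forces its $j$-invariant in $K$ to be transcendental over $k$, so the $\mathcal{E}_{i,\eta}$ are pairwise non-isogenous and none of them is isogenous over $\overline{K}$ to any constant $E_j$. Together with the pairwise non-isogeny of the $E_j$, iterating Lemma~\ref{lemsubg1} yields a decomposition
$$
G_\eta \;=\; \prod_{i=1}^{l} H_i \;\times\; \prod_{j=1}^{p} H'_j \;\times\; H''
$$
with $H_i\subseteq \mathcal{E}_{i,\eta}^{n_i}$, $H'_j\subseteq E_j^{m_j}$ and $H''\subseteq \mathbb{G}_m^q$.

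For the first assertion, I would apply Lemma~\ref{lemsubg2} iteratively in each factor. If $H_i$ has codimension $c_i$ in $\mathcal{E}_{i,\eta}^{n_i}$, a first application gives a non-zero $\bo{a}_1\in\mathbb{Z}^{n_i}$ with $H_i\subseteq\ker\bo{a}_1$; restricting to the identity component of $\ker\bo{a}_1$, which is isogenous to $\mathcal{E}_{i,\eta}^{n_i-1}$, and iterating $c_i$ times produces a rank-$c_i$ matrix $\alpha_i\in M_{n_i}(\mathbb{Z})$ (after padding with zero rows to keep it square) with $H_i\subseteq\ker\alpha_i$. The same argument, with the relevant endomorphism ring in place of $\mathbb{Z}$, gives $\beta_j\in M_{m_j}(R_j)$ of rank $c'_j$ with $H'_j\subseteq\ker\beta_j$ and $\gamma\in M_q(\mathbb{Z})$ of rank $c''$ with $H''\subseteq\ker\gamma$. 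Assembling these into a block-diagonal element $\alpha\in R$ of rank $\sum c_i+\sum c'_j+c''\ge d$ gives $G_\eta\subseteq\ker\alpha$; zeroing further rows inside a single block enlarges the kernel and lowers the rank, so we may reduce the total rank to exactly $d$ while preserving $G_\eta\subseteq\ker\alpha$. Spreading out from $\eta$ to $S$ yields $G\subseteq\ker\alpha$ as subschemes of $\mathcal{A}$.

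For the second assertion, let $\alpha\in R$ have rank $d$. Its kernel is a closed subgroup scheme of $\mathcal{A}$. Since the block entries of $\alpha$ lie in the constant rings $\mathbb{Z}$ and the $R_j$, the specialization $\alpha_s:\mathcal{A}_s\to\mathcal{A}_s$ has the same matrix rank $d$ at every $s\in S$. Every factor of $\mathcal{A}_s$ is one-dimensional, so the kernel of a rank-$d$ homomorphism of such a product has codimension exactly $d$ in $\mathcal{A}_s$; hence every fibre of $\ker\alpha\to S$ has pure dimension $\dim\mathcal{A}_\eta-d$. On the smooth curve $S$ this equidimensionality of fibres is equivalent to flatness (\cite{Hart}, Proposition~III~9.7), and the codimension in $\mathcal{A}$ is $d$, as required. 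The main obstacle is making the iteration in the first part fully rigorous in the presence of CM factors and possibly disconnected intermediate kernels: one must interpret the ``rank'' of an element of $R$ as the rank over the fraction field of the underlying endomorphism ring and choose the successive kernels so that the codimension drops by exactly one on the identity component at each step. Given Lemmas~\ref{lemsubg1}--\ref{lemsubg3} this is a careful but routine bookkeeping exercise, and both assertions then follow.
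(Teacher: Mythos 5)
Your overall strategy --- pass to the generic fibre, decompose $G_\eta$ using the pairwise non-isogeny of the factors as in Lemma \ref{lemsubg1}, extract a rank-$d$ block-diagonal $\alpha$ by iterating Lemma \ref{lemsubg2} inside each block, and then check flatness of $\ker \alpha$ fibre by fibre --- is essentially the argument of Lemma 2.5 of \cite{Hab}, which is exactly what the paper's proof invokes. The first assertion is fine modulo the bookkeeping you acknowledge: at each step of the iteration one only gets the identity component $H_i^0$ inside the new kernel, and one must pass to $H_i\subseteq\ker(N\bo{a})$ for an integer $N$ killing the finite group $H_i/H_i^0$, which does not change the rank. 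Your route also quietly sidesteps the paper's appeal to Corollary 1.5 of \cite{Noot}, since at the generic point the endomorphism rings of the non-isotrivial factors are automatically $\Z$ (their $j$-invariants are transcendental); it would be worth saying this explicitly.

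The genuine gap is in your justification of the second assertion. Proposition III.9.7 of \cite{Hart} characterizes flatness over a smooth curve by the condition that no associated (in the reduced case, no irreducible) component of the total space is vertical; it does \emph{not} say that flatness is equivalent to equidimensionality of the fibres, and the implication you use fails in general: a vertical irreducible component of $\ker\alpha$ contained in a single special fibre, of dimension $\dim\mc{A}_\eta-d$ and not lying in the closure of $(\ker\alpha)_\eta$, is perfectly compatible with every fibre of $\ker\alpha\rightarrow S$ having pure dimension $\dim\mc{A}_\eta-d$. Ruling out such a component is precisely the content of the flatness claim, and your argument does not address it. Two ways to close the gap: (i) note that the number of top-dimensional irreducible components of $\ker\alpha_s$ is independent of $s$ (it is computed from the elementary divisors of the blocks of $\alpha$), so every component of a special fibre is already accounted for by the horizontal components coming from the generic fibre; or (ii) observe that $\alpha(\mc{A})$ is a semi-abelian subscheme smooth over $S$, that $\alpha:\mc{A}\rightarrow\alpha(\mc{A})$ is a surjection of regular schemes with fibres of constant dimension and hence flat by miracle flatness, and that $\ker\alpha$ is the preimage of the zero section, hence flat over $S$.
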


\begin{proof}
The lemma can be proved following the line of the proof of Lemma 2.5 of \cite{Hab}. The fact that there is an $s\in S(\C)$ whose endomorphism ring is exactly $R$ follows from Corollary 1.5 of \cite{Noot}.
\end{proof}

From this lemma, we can deduce that each flat subgroup scheme of $\cA$ is contained in a flat subgroup scheme of the same dimension and of the form
$$
G_1\times_S \dots \times_S G_l \times_S H_1  \times_S\dots \times_S H_p \times_S T
$$
where $G_i$ is a flat subgroup scheme of $\cA_i$, $H_j$ an algebraic subgroup of $E_j^{m_j}$ and $T$ an algebraic subgroup of $\mathbb{G}_m^q$.
It is then clear that, by projecting and recalling that we suppose $p$ or $q$ equal to 0, we only need to prove our Theorem \ref{thmscheme} for the following cases:
\begin{enumerate}
\item $\cA=\cA_1 \times_S \cA_2$;
\item $\cA=\cA_1\times_S E_1^{m_1}$;
\item $\cA=\cA_1\times_S \mathbb{G}_m^q$;
\item $\cA=E_1^{m_1} \times_S E_2^{m_2}$.
\end{enumerate}

Moreover, in case (4) the Theorem follows from Theorem 1.1 of Habegger and Pila \cite{HabPila14}, so we are left with the first three cases.\\

We first consider case (1). We need to perform a base change to the Legendre family. 
Consider now the Legendre family with equation \eqref{legendre}. This gives an example of an elliptic scheme, which we call $\cE_L$, over the modular curve $Y(2)=\mathbb{P}^1\setminus \{0,1,\infty\}$. We write $\cE_L^{(g)}$ for the $g$-fold fibered power of $\cE_L$.
We call $\pi_i$ (resp.~ $\pi_L^{(g)}$) the structural morphism $\cA_i\rightarrow S$ (resp.~ $\cE_L^{(g)}\rightarrow Y(2)$).

\begin{lemma} \label{lemdiag}
Let $\cA=\cA_1 \times_S \cA_2$.
After possibly replacing $S$ by a Zariski open, non-empty subset there exist irreducible, non-singular quasi-projective curves $S'$ and $S''\subseteq Y(2)\times Y(2)$ defined over $\Qbar$ such that the following is a commutative diagram
\begin{equation}\label{diag}
\begin{CD}
\cA @<{f}<< \cA' @>e>>  \cA''\\
@V{\pi_1 \times_S \pi_2}VV @VVV @VV{\pi_L^{(n_1)}\times \pi_L^{(n_2)}}V\\
S @<<{l}<  S' @>>{\lambda}> S''
\end{CD}
\end{equation}
where $l$ is finite, $\lambda$ is quasi-finite, $\cA'$ is the abelian scheme $(\cA_1\times_S \cA_2)\times_S S'$, $\cA''=\cE_L^{(n_1)} \times \cE_L^{(n_2)}$, $f$ is finite and flat and $e$ is quasi-finite and flat. Moreover, the restriction of $f$ and $e$ to any fiber of $\cA'\rightarrow S'$ is an isomorphism of abelian varieties.
\end{lemma}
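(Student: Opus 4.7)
The idea is to reduce each factor $\cE_i$ to the Legendre family by a finite \'etale base change that trivialises the $2$-torsion, and then to take a common base change for the two factors. Fix $i \in \{1,2\}$. Since $\cE_i \to S$ is non-isotrivial, its $j$-invariant $j_i \colon S \to Y(1) = \mathbb{A}^1_j$ is non-constant and hence quasi-finite. After shrinking $S$ we may assume that $j_i$ lands in the open subset over which the natural cover $Y(2) \to Y(1)$ is finite \'etale. Pulling this cover back along $j_i$ yields a finite \'etale cover $l_i \colon T_i \to S$ together with a morphism $\mu_i \colon T_i \to Y(2)$. Over $T_i$ the $2$-torsion of $\cE_i$ becomes constant; by the moduli interpretation of $Y(2)$ as the fine moduli space of elliptic curves in Legendre form, this gives an isomorphism of elliptic schemes
$$
\cE_i \times_S T_i \ \cong\ \cE_L \times_{Y(2),\,\mu_i} T_i
$$
over $T_i$. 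Taking the $n_i$-fold fibred power we obtain an analogous isomorphism between $\cA_i \times_S T_i$ and $\cE_L^{(n_i)} \times_{Y(2),\,\mu_i} T_i$.

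Let $S'$ be an irreducible component of the finite \'etale cover $T_1 \times_S T_2 \to S$ and denote by $l \colon S' \to S$ the induced finite morphism. Composing the two projections with $\mu_1,\mu_2$ gives morphisms $\lambda_1,\lambda_2 \colon S' \to Y(2)$, and we set $\lambda = (\lambda_1,\lambda_2) \colon S' \to Y(2)\times Y(2)$, $S'' := \overline{\lambda(S')}$. Each $\lambda_i$ is non-constant because $\cE_i$ is non-isotrivial, so $S''$ is one-dimensional and irreducible; after a final shrinking we may assume that $S''$ is a smooth quasi-projective curve contained in $Y(2)\times Y(2)$ and that $\lambda \colon S' \to S''$ is quasi-finite (any non-constant morphism of smooth curves is quasi-finite).

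Define $\cA'' = \cE_L^{(n_1)} \times \cE_L^{(n_2)}$ over $Y(2)\times Y(2)$ and $\cA' = \cA \times_S S'$. Combining the isomorphisms from the first paragraph and manipulating fibred products,
$$
\cA' \ \cong\ \bigl(\cE_L^{(n_1)} \times_{Y(2),\,\lambda_1} S'\bigr) \times_{S'} \bigl(\cE_L^{(n_2)} \times_{Y(2),\,\lambda_2} S'\bigr) \ \cong\ \cA'' \times_{Y(2)\times Y(2),\,\lambda} S'.
$$
The morphism $f \colon \cA' \to \cA$ is the base change of the finite $l$, hence finite, while $e \colon \cA' \to \cA''$ is the base change of the quasi-finite map $S' \to S'' \hookrightarrow Y(2)\times Y(2)$, hence quasi-finite. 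Flatness of both $f$ and $e$ follows from miracle flatness: they are finite-type morphisms between smooth irreducible varieties of the same relative dimension whose source and target are regular. The fact that $f$ and $e$ restrict to isomorphisms on each fibre of $\pi_{\cA'} \colon \cA' \to S'$ is built into the construction, because both are obtained by base change from morphisms between the bases. Commutativity of diagram \eqref{diag} is then automatic.

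The main obstacle is the content of the first paragraph: rigorously justifying that, after the \'etale base change $T_i \to S$ and a possible further shrinking of $S$, the elliptic scheme $\cE_i \times_S T_i$ is actually isomorphic to the pullback of the Legendre family along an appropriate morphism to $Y(2)$. This requires invoking the fine moduli property of $Y(2)$ and checking that the $2$-torsion subscheme of $\cE_i$ becomes trivialised after the \'etale cover constructed from $j_i$. Once this modular descent is in place, the rest of the argument is a purely formal manipulation of fibred products and follows routinely.
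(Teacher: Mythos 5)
Your overall architecture (reduce each factor separately to the Legendre family over a common finite cover $S'$ of $S$, then combine by base change and check finiteness, quasi-finiteness and flatness formally) is the same as the paper's, which follows Habegger's Lemma 5.4 for each factor and takes the fibered product of the two resulting diagrams. However, the step you yourself flag as the main obstacle --- producing the isomorphism $\cE_i\times_S T_i\cong \cE_L\times_{Y(2),\mu_i}T_i$ --- is where your proposed mechanism genuinely fails. The cover $T_i=S\times_{Y(1),j_i}Y(2)$ obtained by pulling back $Y(2)\to Y(1)$ along the $j$-invariant only records that the fibers of $\cE_i$ and of the Legendre family have equal $j$-invariants; it does not trivialise the $2$-torsion of $\cE_i$, and the two elliptic schemes over $T_i$ may differ by a non-trivial (quadratic) twist. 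Fiberwise they are isomorphic over $\Qbar$, but no isomorphism of elliptic schemes over $T_i$ need exist, so the map $e$ cannot be constructed this way. The appeal to ``the fine moduli property of $Y(2)$'' does not repair this: since $[-1]$ acts trivially on the $2$-torsion, a full level-$2$ structure does not rigidify an elliptic curve, $Y(2)$ is only a coarse moduli space, and families with identical moduli maps can still differ by quadratic twists. Even the honest cover splitting $\cE_i[2]$ is not quite enough, because normalising the three roots of the cubic to $0,1,\lambda$ requires extracting a further square root (of $e_2-e_1$) to rescale $y$.

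The correct construction --- and what the paper does --- is to work with the function field directly: choose a finite extension $K$ of $k(S)$ over which the generic fiber of $\cE_i$ becomes isomorphic to a Legendre curve $y^2=x(x-1)(x-\lambda_i)$ with $\lambda_i\in K$ (this $K$ absorbs both the $2$-torsion field and the extra quadratic twist), let $S'$ be the smooth curve with function field $K$, take $l\colon S'\to S$ the induced finite map and $\lambda_i\colon S'\to Y(2)$ the maps given by the elements $\lambda_i\in K$; the single-factor diagrams are then supplied by Lemma 5.4 of Habegger's paper, and the two are combined exactly as in your second and third paragraphs. So the back end of your argument is sound and matches the paper, but the front end needs to be replaced by this function-field descent rather than the $j$-invariant pullback.
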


\begin{proof}
We follow the line of Lemma 5.4 of \cite{Hab} and skip several details which can be found there. We fix an extension $K$ of $k(S)$ such that, for $i=1,2$, the generic fiber of $\cE_i\rightarrow S$ is isomorphic to an elliptic curve of equation $y^2=x(x-1)(x-\lambda_i)$, for $\lambda_i \in K$. The field $K$ is the function field of an irreducible, non-singular projective curve $\overline{S}'$ and we have a finite morphism $\overline{S}'\rightarrow \overline{S}$. We let $S'$ be the preimage of $S$ in $\overline{S}'$ and call $l:S'\rightarrow S$ the restriction of the above morphism which remains finite. Moreover, we have finite morphisms $\lambda_i:S' \rightarrow Y(2)$. We may shrink $S$ and suppose that the $\lambda_i$ and $l$ are \'etale. 
For $i=1,2$, by Lemma 5.4 of \cite{Hab} we have the commutative diagrams
$$
\begin{CD}
\cA_i @<{f_i}<< \cA'_i @>e_i>>  \cE_L^{(n_i)}\\
@V{\pi_i}VV @VVV @VV{\pi_L^{(n_i)}}V\\
S @<<{l}<  S' @>>{\lambda_i}> Y(2)
\end{CD}
$$
where $\cA'_i=\cA_i\times_S S'$, $f_i$ is finite and flat and $e_i$ is quasi-finite and flat.

The square on the left of \eqref{diag} is the appropriate fibered product over $S$ of the morphisms $\cA'\rightarrow \cA'_i\rightarrow \cA_i$. 

Now, we have the diagram
$$
\begin{CD}
 \cA' @>e_1\times e_2>>  \cE_L^{(n_1)} \times \cE_L^{(n_2)}\\
 @VVV @VV{\pi_L^{(n_1)}\times \pi_L^{(n_2)}}V\\
 S' @>>{\lambda_1\times \lambda_2}> Y(2)\times Y(2)
\end{CD}
$$
We set $S''=\lambda(S')=(\lambda_1, \lambda_2)(S')$, restrict the base of the abelian scheme $\cE_L^{(n_1)} \times \cE_L^{(n_2)}\rightarrow Y(2)\times Y(2)$ to $S''$ and call $e$ the resulting map $\cA' \rightarrow \cE_L^{(n_1)} \times \cE_L^{(n_2)}$. We then have the square on the right of \eqref{diag}.
Finally, to prove the claimed properties of $f$ and $e$ and the fact that they are isomorphisms of abelian varieties when restricted to the fibers one can proceed as in Lemma 5.4 of \cite{Hab}.
\end{proof}

One can prove that analogous results hold for cases (2) and (3).\\

We will also need the following technical lemma which holds in all three cases.

\begin{lemma} \label{lemhab}
If $G$ is a flat subgroup scheme of $\cA$ then $e\left(f^{-1}(G)\right)$ is a flat subgroup scheme of $\cA''$ of the same dimension.
Moreover, let $X$ be a subvariety of $\cA$ dominating $S$ and not contained is a proper flat subgroup scheme of $\cA$, $X''$ an irreducible component of $f^{-1}(X)$ and $X'$ the Zariski closure of $e(X'')$ in $\cA''$. Then $X'$ has the same dimension of $X$, dominates $S''$ and is not contained in a proper flat subgroup scheme of $\cA''$.
\end{lemma}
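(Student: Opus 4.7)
The plan is to handle $f$ and $e$ separately, exploiting the structural properties from Lemma~\ref{lemdiag}: $f$ is finite and flat, $e$ is quasi-finite and flat, and both restrict to isomorphisms of abelian varieties on geometric fibers. For the first assertion, since $\cA' = \cA \times_S S'$ and $f$ is the first projection, $f^{-1}(G) = G \times_S S'$ is automatically a flat subgroup scheme of $\cA'$ by base change, and its dimension equals $\dim G$ because $l$ is finite (so $\dim S' = \dim S$). After possibly shrinking $S$ so that $\lambda$ becomes étale onto $S''$, the morphism $e$ is fiber-wise a group isomorphism compatible with the structure morphisms, so $e(f^{-1}(G))$ inherits the subgroup-scheme structure. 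It is closed in $\cA''$ because the restriction of $e$ to $f^{-1}(G)$ is finite onto its image, and it is flat of dimension $\dim G$ because $e$ is quasi-finite with fibers isomorphic to those of $f^{-1}(G)$.

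For the second assertion, the dimension and dominance claims are essentially formal. Since $f$ is finite and flat, every irreducible component of $f^{-1}(X)$ has dimension $\dim X$, so $\dim X'' = \dim X$; then $\dim X' = \dim X$ by quasi-finiteness of $e$. For dominance of $S''$, one may shrink $S$ so that $l$ is étale: every irreducible component of the base change $X \times_S S'$ then dominates $S'$, so in particular $X''$ does, and composing with the projection $\pi'' \circ e$ one obtains that $X'$ dominates $\lambda(S') = S''$.

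The main obstacle is the non-containment assertion, which I would prove by contradiction. Suppose $X' \subseteq G''$ for some proper flat subgroup scheme $G''$ of $\cA''$. By Lemma~\ref{lemmat} applied to $\cA''$ there exists a non-zero $\alpha'' \in R''$ with $G'' \subseteq \ker \alpha''$. Using the fiber-wise identifications coming from $e$ and $f$, the endomorphism $\alpha''$ transports to an endomorphism of $\cA \times_S S'$. After enlarging $S'$ if necessary so that $l$ is Galois over $S$ and averaging over $\mathrm{Gal}(S'/S)$ (for example by taking the product of the Galois conjugates), one obtains a non-zero endomorphism $\alpha$ of $\cA$ defined over $S$, whose kernel is a proper flat subgroup scheme of $\cA$ containing $X$, contradicting the hypothesis on $X$. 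The delicate points are verifying that the averaged endomorphism is non-zero with proper kernel, and that the containment $X \subseteq \ker \alpha$ is preserved by the averaging; both can be checked on a single good fiber where $\alpha''$ is non-zero, following the line of argument of Lemma~5.5 of \cite{Hab}.
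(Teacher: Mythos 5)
The paper's own ``proof'' is a one-line reference to the proof of Lemma~5.5 of \cite{Hab}, and your outline follows essentially that line (base change for $f$, fiberwise group isomorphisms for $e$, reduction to $\ker\alpha$ via Lemma~\ref{lemmat} for the non-containment). Two steps, however, need repair. First, you justify closedness of $e\left(f^{-1}(G)\right)$ by claiming that $e$ restricted to $f^{-1}(G)$ is \emph{finite} onto its image; but $e$ is only quasi-finite (its base $\lambda:S'\to S''$ is quasi-finite, not finite), and quasi-finite morphisms need not have closed image. This point requires the actual argument of \cite{Hab} (or an appeal to flatness/openness of $e$ together with the group-scheme structure) rather than the finiteness claim.

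Second, and more substantively, the Galois-averaging step in the non-containment argument is both under-specified and unnecessary. ``Taking the product of the Galois conjugates'' of an endomorphism is problematic: the conjugates of a matrix $\alpha''$ need not commute, so neither the composition nor any ``product'' is canonically $\mathrm{Gal}(S'/S)$-invariant, and it is not clear that the kernel of such a product still contains $X$ (one has $\ker(\beta\circ\alpha)\supseteq\ker\alpha$ only when $\alpha$ is applied first). The clean route is to observe that, by Lemma~\ref{lemmat}, one may take $\alpha''\in R$ to be a block-diagonal matrix with entries in $\Z$ (or in the relevant $R_{j_0}$, already defined over $k$); such a matrix commutes with the fiberwise isomorphisms induced by $e$ and $f$ (which act coordinatewise by a single elliptic-curve isomorphism on each block), hence the \emph{same} matrix defines an endomorphism $\alpha$ of $\cA$ over $S$ with $X\subseteq\ker\alpha$ --- no descent is needed. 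Alternatively, if one insists on descending, one should intersect the kernels $\bigcap_\sigma\ker\alpha^\sigma$ (a Galois-stable proper flat subgroup scheme containing $X_{S'}$, since $X$ is defined over $S$) rather than form a product of conjugates. With these two repairs your argument matches the intended one.
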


\begin{proof}
This follows from the proof of Lemma 5.5 of \cite{Hab}.
\end{proof}

We are now ready to see how Theorem \ref{thmscheme} can be deduced from Theorems \ref{mainthm} and \ref{mainthm2} and earlier results.

Consider case (1).
We can assume that $\cC$ is not contained in a proper flat subgroup scheme of $\cA$. Therefore, it is enough to show that $\cC\cap \bigcup G$ is finite where the union is taken over all flat subgroup schemes of codimension at least 2. 

Take $\cC'$ an irreducible component of $f^{-1}\left(\cC \right) $ and consider the Zariski closure $\cC''$ of $e(\cC')$. By Lemma \ref{lemhab}, $\cC''$ is a curve in $\cA''$ dominating $S''$ and not contained in a proper flat subgroup scheme.

Now, since $e$ is quasi-finite, if $e\left(f^{-1}\left(\cC\cap \cA^{\{2\}}\right)\right)$ is finite then $\cC\cap \cA^{\{2\}}$ is finite and, by Lemma \ref{lemhab}, we have
$$
e\left(f^{-1}\left(\cC\cap \cA^{\{2\}}\right)\right) \subseteq e\left(f^{-1}\left(\cC \right) \right) \cap \cA''^{\{2\}}.
$$
Therefore, we can reduce to proving our claim for $\cA''$ and for $\cC''$. 
Note that the generic fiber of $\cA''$ is isomorphic to the generic fiber of $\cA$ and is therefore a product of powers of non-isogenous elliptic curves. 

By Lemma \ref{lemmat}, each flat subgroup scheme of codimension at least 2 of $\cA''$ is contained in $\textnormal{ker}\, \alpha$ for some $\alpha$ of rank 2, where $\alpha$ is a block diagonal matrix with two blocks of respective sizes $n_1$ and $n_2$ and has two non-zero rows. In case these two rows are in the same block, then we are in the case of the Theorem 2.1 of \cite{linrel} while, if they are in two different blocks, then we are in the case of Theorem \ref{mainthm}. In any case, $\cC''$ intersects only finitely many such flat subgroup schemes and we have the claim in case (1).\\

For the other two cases one proceeds in the same way. If the two non-zero rows of $\alpha$ are contained in two different blocks then we apply Theorem \ref{mainthm} or \ref{mainthm2}. If they are in the same block then one can use results of Viada \cite{Viada2008} and Galateau \cite{galateau2010} for case (2) and Maurin \cite{Maurin} for case (3). This concludes the proof of Theorem \ref{thmscheme}.

\section*{Funding}

This work was supported by the European Research Council [267273], the Engineering and Physical Sciences Research Council [EP/N007956/1 to F.B. and EP/N008359/1 to L.C.], the Istituto Nazionale di Alta Matematica [Borsa Ing. G. Schirillo to L.C.] and the Swiss National Science Foundation [165525 to F.B.].

\section*{Acknowledgments}

The authors would like to thank Umberto Zannier for his support and Daniel Bertrand, Gareth Jones, Lars K\"uhne and Harry Schmidt for useful discussions.

\bibliographystyle{plain}
\bibliography{bibliography}

\end{document}